\numberwithin{equation}{section}
\theoremstyle{plain}
\newcounter{intro}
\newtheorem{theorem}[equation]{Theorem}
\newtheorem{hypothesis}[equation]{Hypothesis}
\newtheorem{proposition}[equation]{Proposition}
\newtheorem{lemma}[equation]{Lemma} 
\newtheorem{corollary}[equation]{Corollary}
\theoremstyle{definition}
\newtheorem{chunk}[equation]{}
\theoremstyle{remark}
\newcommand{\acat}{\operatorname{C}_{\mathcal{O}}}
\newcommand{\con}[1]{\Phi_{#1}}
\newcommand{\Coker}{\operatorname{Coker}}
\newcommand{\cmod}[1]{\Psi_{#1}}
\newcommand{\depth}{\operatorname{depth}}
\newcommand{\eps}{\varepsilon}
\newcommand{\Ext}{\operatorname{Ext}}
\newcommand{\ecoh}{\operatorname{F}}
\newcommand{\height}{\operatorname{height}}
\newcommand{\hh}{\operatorname{H}}
\newcommand{\Ker}{\operatorname{Ker}}
\newcommand{\length}{\operatorname{length}}
\newcommand{\ord}{\operatorname{ord}}
\newcommand{\pos}[1]{[\![{#1}]\!]}
\newcommand{\rank}{\operatorname{rank}}
\newcommand{\Spec}{\operatorname{Spec}}
\newcommand{\tors}{\operatorname{tors}}
\newcommand{\tfree}[1]{{#1}^{\operatorname{tf}}}
\newcommand{\vf}{\varphi}
\newcommand{\fm}{\mathfrak{m}} 
\newcommand{\fp}{\mathfrak{p}}
\newcommand{\lra}{\longrightarrow}
\newcommand{\xra}{\xrightarrow}
\newcommand{\bs}{\boldsymbol}
\newcommand{\mco}{\mathcal{O}}
\newcommand{\mcv}{\mathcal{V}}
\newcommand{\mcz}{\mathcal{Z}}
\newcommand{\rhobar}{\overline{\rho}}
\newcommand{\GL}{\mathrm{GL}}
\newcommand{\PGL}{\mathrm{PGL}}
\DeclareMathOperator{\Frob}{Frob}
\newcommand{\mcn}{\mathcal{N}}
\newcommand{\bbA}{\mathbb{A}}
\newcommand{\TT}{\mathbb{T}}
\newcommand{\QQ}{\mathbb{Q}}
\newcommand{\sm}{\smallsetminus}
\newcommand{\es}{\varnothing}
\newcommand{\onto}{\twoheadrightarrow}
\begin{document}

\title[Freeness of Hecke modules]{Freeness of Hecke modules at non-minimal levels}

\author[S.~B.~Iyengar]{Srikanth B.~Iyengar}
\address{Department of Mathematics,
University of Utah, Salt Lake City, UT 84112, U.S.A.}
\email{iyengar@math.utah.edu}

\author[C.~B.~Khare]{Chandrashekhar  B. Khare}
\address{Department of Mathematics,
University of California, Los Angeles, CA 90095, U.S.A.}
\email{shekhar@math.ucla.edu}

\author[J.~Manning]{Jeffrey Manning}
\address{Max Planck Institute for Mathematics,
Vivatsgasse 7, 53111 Bonn, Germany}
\email{manning@mpim-bonn.mpg.de }

\date{\today}

\keywords{congruence module, Hecke module, modularity lifting, Wiles defect}
\subjclass[2020]{ 11F80 (primary); 13C10, 13D02  (secondary)}

\begin{abstract} 
 We build on the results of \cite{Iyengar/Khare/Manning:2022a} to show that the homology group $\hh_{r_1+r_2}(Y_0(\mcn_\Sigma),\mco)_{\fm_\Sigma}$ of arithmetic manifolds are free over  certain deformation rings $R_\Sigma$, when there are enough geometric characteristic 0 representations. Hitherto we had proved that the homology group has a nonzero free $R_\Sigma$-direct summand. The new ingredient is a commutative algebra argument involving  congruence modules defined in higher codimension in \cite{Iyengar/Khare/Manning:2022a}.

\end{abstract}

\maketitle

\setcounter{tocdepth}{1}
\tableofcontents

\section{Introduction}
\label{se:intro}
This work concerns the action of Hecke algebras and Galois deformation rings on homology groups of certain arithmetic manifolds. Let $X_0(N)$ be a modular curve, $\mco$ the ring of integers of a finite extension of $\QQ_\ell$, and $\fm$ a non-Eisenstein maximal ideal of the Hecke algebra $\TT$ acting on $\hh_1(X_0(N),\mco)_{\fm}$. Using the modularity lifting results of Wiles~\cite{Wiles:1995}, and Taylor and Wiles~\cite{Taylor/Wiles:1995}, Diamond~\cite{Diamond:1997} proved  that  a certain deformation ring  $R$ acts freely  on   $\hh_1(X_0(N),\mco)_{\fm}$, so the map $R \onto \TT$, through which the $R$ action on Hecke module factors, is an isomorphism. In particular, the  $\TT$-module  $\hh_1(X_0(N),\mco)_{\fm}$ is free. Freeness  of this Hecke module was proved earlier by Mazur \cite{Mazur} using geometric arguments, and even without the assumption that $\fm$ is non-Eisenstein.

 Diamond uses patching  to deduce freeness of the Hecke module over $R$  when $N$ is the minimal level for the residual representation $\rhobar_{\fm}$ attached to $\fm$. For non-minimal levels $N$, he  uses a numerical criterion, \cite[Theorem 2.4]{Diamond:1997}, for a finitely generated $R$-module  to be free and for $R$ to be a complete intersection. The fact that  the criterion is applicable  is verified by level raising arguments a la Ribet, that had already been used in similar contexts in \cite{Wiles:1995}. Use of  classical multiplicity one results for modular forms  which prove such freeness results after  tensoring with the fraction field  of $\mco$ were another important ingredient in being able to apply the numerical criterion.

In   \cite{Iyengar/Khare/Manning:2022a} we developed a higher codimensional  version of the Wiles--Lenstra--Diamond numerical criterion  for finitely generated modules $M$  of sufficient depth over certain families of rings $R$ to  have a free direct summand of positive rank, and for $R$ to be complete intersection. We applied this to the ring $R_\infty$ and module $M_\infty$ produced by the patching method to deduce that $M_\infty$ has a free direct summand of some positive rank. From this it follows that the lowest degree non-vanishing homology group $\hh_d(Y,\mco)_{\fm}$ of a certain arithmetic manifold $Y$ associated to $\PGL_2$ over an arbitrary number field $F$ also has a free direct summand of positive rank as a module over the Hecke algebra $\TT$.

On the other hand our methods did not yield freeness of the Hecke module, in contrast to the result of \cite{Diamond:1997}. The reason for this is that patching gives no direct control over the generic rank of the module $M_\infty$. Typically the only way of determining this generic rank is to use classical multiplicity one theorems for automorphic forms to determine the rank of $M_\infty$ at all $\overline{\QQ}_\ell$-points of $\Spec \TT\subseteq \Spec R_\infty$. This approach  only works if $\Spec \TT$ has enough $\overline{\QQ}_\ell$-points --- to directly deduce freeness from the main result of \cite{Iyengar/Khare/Manning:2022a} one would need to have a $\overline{\QQ}_\ell$-point of $\Spec \TT$ on each irreducible component of $\Spec R_\infty$. This issue also arises in \cite[Theorem 1.3]{Calegari/Geraghty:2018}.

In the modular curve case considered by Diamond, this is not an issue as the Hecke algebra is flat over $\mco$ and so  has enough $\overline{\QQ}_\ell$-points. For arithmetic manifolds associated to arbitrary number fields, the Hecke algebra $\TT$ is, in general, not flat over $\mco$ and may not have enough $\overline{\QQ}_\ell$-points to determine the generic rank of $M_\infty$. In fact, $\TT$ may well have no $\overline{\QQ}_\ell$-points at all! Thus the requirement that $\Spec \TT$ have a $\overline{\QQ}_\ell$-point on each irreducible component of $\Spec R_\infty$ is extremely restrictive.

In this paper we develop new commutative algebra arguments, building on those of  \cite{Iyengar/Khare/Manning:2022a}, to show freeness of patched modules under far less restrictive conditions. The main result is Theorem \ref{th:only} which allows us to use information about the rank of $M_\infty$ over one component of $\Spec R_\infty$ to determine its rank over the other components. 

When the number field $F$ is totally complex (considered in Theorem \ref{th:mult 1}), this allows us to deduce freeness assuming only the existence of a $\overline{\QQ}_\ell$-point of $\Spec \TT$ lying on one particular component of $\Spec R_\infty$ (the ``highest level'' component). This is equivalent to assuming the existence of a particular geometric characteristic 0 lift of $\rhobar_{\fm}$ (ramified at a specified set of primes $\Sigma$, as well as some other auxiliary primes). One might expect that such  a lift  always exists (although this is far from known in general), and so our work represents a plausible approach for proving freeness of Hecke modules associated to totally complex fields.

For an arbitrary number field $F$ (considered in Theorem \ref{th:mult 2^r1}) the situation is somewhat more complicated. In order to deduce freeness from our method, one needs to prove an appropriate lower bound on the generic rank of $M_\infty$ on the ``minimal level'' component in addition to having a characteristic $0$ lift of $\rhobar_{\fm}$ corresponding to a point on the ``highest level'' component. In the case when $F$ is totally complex, the required lower bound is $1$ and so one gets this for free. However in the case of a general $F$, we are only able to prove this in the case when $\Spec \TT$ has an additional $\overline{\QQ}_\ell$-point on the ``minimal level'' component of $\Spec R_\infty$.

\section{Congruence modules and Wiles defect}\label{se:congruence}
In this section we recall the construction and basic properties of congruence modules and Wiles defect, introduced in \cite[Section 2]{Iyengar/Khare/Manning:2022a}, for modules over  local rings. 

\begin{chunk}
	Let $\mco$ be a discrete valuation ring, with valuation $\ord(-)$ and uniformizer $\varpi$. Throughout we fix a complete local $\mco$-algebra $A$ and a finitely generated $A$-module $M$. Given a map $\lambda\colon A\to \mco$ of $\mco$-algebras, set $\fp_\lambda \colonequals \Ker(\lambda)$ and 
	\[
	\con{\lambda}(A)\colonequals \tors(\fp_\lambda/\fp_\lambda^2)\,,
	\]
	namely, the torsion part of the cotangent module $\fp_\lambda/\fp_\lambda^2$ of $\lambda$. This is a torsion $\mco$-module.  For any finitely generated $A$-module $M$, set
	\[
	\ecoh^i_{\lambda}(M)\colonequals \tfree{\Ext^i_A(\mco,M)}
	\]
	the torsion-free quotient of the $\mco$-module  $\Ext^i_A(\mco,M)$. Here $\mco$ is viewed as an $A$-module via $\lambda$. The \emph{congruence module} of $M$ at $\lambda$ is the $\mco$-module
	\[
	\cmod{\lambda}(M)\colonequals \Coker\left(\ecoh^c_{\lambda}(M) \xra{\ \ecoh^c(\eps)\ }\ecoh^c_{\lambda}(M/\fp M)\right) 
	\]
	where $c\colonequals \height{\fp_{\lambda}}$ and $\eps \colon M\to M/\fp M$ is the natural surjection.
\end{chunk}

\begin{chunk}
	\label{ch:acat}
	Let $A$ be an $\mco$-algebra and $\lambda\colon A\to\mco$ a map of $\mco$-algebras. The following conditions are equivalent:
	\begin{enumerate}[\quad\rm(1)]
		\item
		The local ring $A_{\fp_\lambda}$ is regular.
		\item
		The rank of the $\mco$-module $\fp_\lambda/\fp^2_\lambda$ is $\height \fp_\lambda$.
		\item
		The $\mco$-module $\cmod{\lambda}(A)$ is torsion.
		\item
		The $\mco$-module $\cmod{\lambda}(M)$ is torsion for each finitely generated $A$-module $M$.
	\end{enumerate}
	Moreover, when these conditions hold the $\mco$-module $\cmod{\lambda}(A)$ is cyclic. 
	
	Condition (2) is that the embedding dimension of the ring $A_{\fp_\lambda}$ is equal to its Krull dimension, so the equivalence of (1) and (2) is one definition of regularity; see \cite[Definition~2.2.1]{Bruns/Herzog:1998}. For the other equivalences see
	\cite[Theorem 2.5 and Lemma~2.6]{Iyengar/Khare/Manning:2022a}.
	
	The pairs $(A,\lambda)$ satisfying the equivalent conditions above are the objects of a category $\acat$. A morphism $\vf\colon (A,\lambda)\to (A',\lambda')$ in $\acat$ is a map of $\mco$-algebras $\vf\colon A\to A'$ over $\mco$; that is to say, with $\lambda'\circ \vf = \lambda$. For a natural number $c$, the subcategory $\acat(c)$ of $\acat$ consists of pairs $(A,\lambda)$ such that $\height \fp_\lambda=c$.
\end{chunk}

\begin{chunk}
	\label{ch:cmod-properties}
Fix a pair $(A,\lambda)$  in $\acat$ and a finitely generated $A$-module $M$.  Since $A$ is regular at $\fp_\lambda$, and in particular a domain, the $A_{\fp_\lambda}$-module $M_{\fp_\lambda}$ has a rank. The \emph{Wiles defect} of $M$ at $\lambda$ is the integer
	\[
	\delta_\lambda(M)\colonequals \rank_{A_{\fp_\lambda}}(M_{\fp_\lambda}) \cdot \length_{\mco}\con{\lambda}(A) -  \length_{\mco}\cmod{\lambda}(M)\,.
	\]
	In particular the Wiles defect of $A$ at $\lambda$ is 
	\[
	\length_{\mco}\con{\lambda}(A) - \length_{\mco}\cmod{\lambda}(A)\,.
	\]
	We refer to \cite[Introduction]{Iyengar/Khare/Manning:2022a} for a discussion on precedents to this definition. Here are some salient properties of the Wiles defect.
	
	\begin{enumerate}[\quad\rm(1)]
		\item
		\label{it:defect-ci}
		If $A$ is complete intersection, then $\delta_\lambda(A)=0$; the converse holds when in addition $\depth A\ge c+1$.
		\item
		\label{it:defect-positive}
		If $\depth_AM\ge c+1$, then $\delta_{\lambda}(M) \ge 0$.
		\item
		\label{it:defect-invariance}
		If $\vf\colon A'\to A$ is a surjective map in $\acat(c)$ and $\depth_AM\ge c$, then
		\[
		\delta_{\lambda\vf}(M) = \delta_{\lambda}(M)\,.
		\]
		We refer to this property as the invariance of domain for congruence modules.
		\item
		\label{it:defect-freeness}
		Assume $A$ is Gorenstein and $M$ is maximal Cohen--Macaulay.  If $\delta_{\lambda}(M) = \mu \cdot \delta_{\lambda}(A)$, for $\mu\colonequals \rank_{A_{\fp_\lambda}}(M_{\fp_\lambda})$, then there is an isomorphism of $A$-modules
		\[
		M\cong A^\mu \oplus W \quad\text{and $W_{\fp_\lambda}=0$.}
		\]
	\end{enumerate}
	These results are established in \cite{Iyengar/Khare/Manning:2022a}: For (1) and (2), see Theorem A; for (3), see Theorem E, and for (4) see Theorem B.
\end{chunk}

\section{The setup}
\label{se:setup}

Fix a finite set $T$. In the number theoretic applications, $T$ will be a finite set of primes in the ring of integers $\mco_F$ of a number field $F$. However, for ease of notation, in this section we take $T\colonequals \{1,\cdots,n\}$ for some integer $n\ge 1$.

\begin{chunk}
	\label{ch:rings}
	Fix an integer $g\ge 1$ and the $\mco$-algebra
	\[
	A\colonequals \frac{\mco\pos{x_1,\dots,x_n,y_1,\dots,y_n,t_1,\dots,t_g}}{(x_1y_1,\dots, x_ny_n)}\,.
	\]
	For each subset $\Sigma\subseteq T$, set
	\[
	A_{\Sigma}\colonequals A/(x_i\mid i\notin\Sigma)\,.
	\]
	Evidently each $A_{\Sigma}$ is a reduced complete intersection, of dimension $n+g$.  Moreover the ring $A_{\varnothing}\cong A/(\bs x)$ is regular. Given subsets $\Sigma'\subseteq \Sigma$ there is an induced surjection 
	\[
	A_{\Sigma}\twoheadrightarrow A_{\Sigma'}\,,
	\]
	of $\mco$-algebras. In particular, the family $\{A_{\Sigma}\}$, as $\Sigma$ various over subsets of $T$, has an initial object, $A_{T}=A$, and final object $A_{\varnothing}$.
	
	For each $\Sigma\subseteq T$, consider the ideal 
	\[
	I_{\Sigma}\colonequals A(x_i\mid i\notin\Sigma) + A(y_j\mid j\in \Sigma)\,.
	\]
	Then the Zariski closed subsets $V(I_{\Sigma})$, as $\Sigma$ varies over the subsets of $T$, are the irreducible components of $\Spec A$. Moreover
	\[
	\Spec A_{\Sigma} = \bigcup_{\Sigma'\subseteq\Sigma} V(I_{\Sigma'})
	\]
	viewed as subsets of $\Spec A$. Thus, the irreducible components of $\Spec A_{\Sigma}$ are a subset of the irreducible components of $\Spec A$.
\end{chunk}

\begin{chunk}
	\label{ch:o-points}
	We focus on the $\mco$-points in  $\Spec A$, namely, the subset
	\[
	\mcv \colonequals \{(a_1,\dots,a_n,b_1,\dots,b_n,c_1,\dots,c_g)\in \varpi(\mco)^{2n+g} \mid a_ib_i = 0\quad \text{for $1\le i\le n$}. \}
	\]
	Since $\mco$ is a domain $a_ib_i=0$ if and only if  $a_i=0$ or $b_i=0$. Each point $\bs v \colonequals (\bs a, \bs b, \bs c)$ in $\mcv$ corresponds to the prime ideal 
	\[
	\fp_{\bs v}\colonequals (x_i-a_i,y_j-b_j,t_k-c_k\mid 1\le i,j\le n \text{ and } 1\le k\le g)\,.
	\]
	This is the kernel of the map of local $\mco$-algebras
	\[
	\lambda_{\bs v}\colon A\lra \mco \quad\text{where $\lambda_{\bs v}(x_i)=a_i, \lambda_{\bs v}(y_j)=b_i, \text{ and } \lambda_{\bs v}(t_k)=c_k$.}
	\]
	
	For  $\Sigma\subseteq T$ consider the following subsets of $\mcv$:
	\begin{align*}
		\mcv_\Sigma & \colonequals \{(\bs a,\bs b,\bs c)\in \mcv\mid \text{$a_i = 0$ for $i\not\in\Sigma$}\} \\
		\mcz_\Sigma & \colonequals \{(\bs a,\bs b,\bs c)\in \mcv\mid \text{$a_i = 0$ for $i\not\in\Sigma$ and $b_j=0$ for $j\in\Sigma$.}\} 
	\end{align*}
	Thus $\mcv_\Sigma$ are the $\mco$-valued points in $\Spec A_{\Sigma}$ viewed as a subset $\Spec A$. Furthermore, $\mcz_\Sigma$, as $\Sigma$ varies over the subsets of $T$, are the $\mco$-valued point of the irreducible components of $\Spec A$; see \ref{ch:rings}. Set
	\begin{equation}
		\label{eq:interior-component}
		\begin{aligned}
			{\mcz}^{\circ}_{\Sigma} 
			&\colonequals \mcz_{\Sigma}\setminus \bigcup_{\Sigma'\ne \Sigma} \mcz_{\Sigma} \\
			&=  \{(\bs a,\bs b,\bs c)\in \mcv\mid \text{$a_i \ne 0$ for  $i\in\Sigma$ and $b_j \ne 0$ for  $j\notin\Sigma$.}\} 
		\end{aligned}
	\end{equation}
	These are the $\mco$-valued points that lie in a single irreducible component of $\Spec A$. Observe that in ${\mcz}^{\circ}_{\Sigma}$ there are points $\bs v$ where a given subset of the components $\bs a,\bs b$ is fixed, and the other components are of any specified order. This fact is the key in the proof of Theorem~\ref{th:only}.
	
	Each $\bs v$ in $\mcz^{\circ}_{\Sigma} $ is a regular point of $A$, in that the local ring  $A_{\fp_{\bs v}}$ is regular. Set
	\begin{equation}
		\label{eq:interior-points}
		{\mcv}^{\circ}_{\Sigma} \colonequals \bigsqcup_{\Sigma'\subseteq \Sigma} {\mcz}^{\circ}_{\Sigma'}\,; 
	\end{equation}
	these are the $\mco$-valued points on $\Spec A_{\Sigma}$ that lie in a single component. Observe that the surjection $A\to A_{\Sigma}$ is an isomorphism at each $\bs v$ in $\mcv^{\circ}_{\Sigma}$, in that localization at $\bs v$ is an isomorphism
	\[
	A_{\fp_{\bs v}} \xra{\ \cong\ } (A_\Sigma)_{\fp_{\bs v}}\,.
	\]
	Thus $\bs v$ is regular also as a point in $\Spec A_{\Sigma}$; said otherwise,  $(A_\Sigma,\lambda_{\bs v})$ is in $\acat$.
\end{chunk}

\begin{chunk}
	\label{ch:cotangent-module}
	We compute the cotangent modules of the rings $A_\Sigma$ at various points in $\mcv^{\circ}_{\Sigma}$. To ease up notation, we set
	\[
	\con {\bs v}(-) \colonequals \con{\lambda_{\bs v}}(-)\,.
	\]
	Here is a simple computation. 
	
	\begin{lemma}
		\label{le:cotangent-Rsigma}
		For $\Sigma'\subseteq \Sigma$ and $\bs v$ is in $\mcz^{\circ}_{\Sigma'}$ one gets
		\[
		\con{\bs v}(A_{\Sigma}) = \bigoplus_{i\in\Sigma'} \frac{\mco[y_i]}{a_i[y_i]} \bigoplus_{j\in\Sigma\setminus\Sigma'} \frac{\mco[x_j]}{b_j[x_j]}\,,
		\]
		and hence $\length_{\mco} \con {\bs v}(A_{\Sigma}) = \sum_{i\in\Sigma'} \ord(a_i) + \sum_{j\in\Sigma\setminus\Sigma'} \ord(b_j)$.
	\end{lemma}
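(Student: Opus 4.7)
The plan is to get a handle on $\fp_{\bs v}/\fp_{\bs v}^2$ by presenting $A_\Sigma$ as a quotient of a power series ring and then reading off the cotangent module via the conormal sequence. Since $x_i=0$ in $A_\Sigma$ for $i\notin\Sigma$, I would write $A_\Sigma = B/J$ with $B\colonequals \mco\pos{x_i,y_1,\dots,y_n,t_1,\dots,t_g\mid i\in\Sigma}$ and $J\colonequals (x_iy_i\mid i\in\Sigma)$. Under this presentation $\fp_{\bs v}=\mathfrak{P}/J$, where $\mathfrak{P}\subseteq B$ is the prime ideal generated by $x_i-a_i$ for $i\in\Sigma$, by $y_j-b_j$ for $1\le j\le n$, and by $t_k-c_k$ for $1\le k\le g$. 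After the obvious translation of the power series variables, $\mathfrak{P}/\mathfrak{P}^2$ is identified with the free $\mco$-module on the classes $\bar x_i,\bar y_j,\bar t_k$ of these generators.

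The next step is to pull the generators of $J$ back to $\mathfrak{P}/\mathfrak{P}^2$. Expanding
\[
x_iy_i \;=\; (x_i-a_i)(y_i-b_i) + a_i(y_i-b_i) + b_i(x_i-a_i) + a_ib_i,
\]
and using both $a_ib_i=0$ (since $\bs v\in\mcv$) and $(x_i-a_i)(y_i-b_i)\in\mathfrak{P}^2$, I obtain $x_iy_i\equiv a_i\bar y_i+b_i\bar x_i\pmod{\mathfrak{P}^2}$. Hence the conormal quotient yields
\[
\fp_{\bs v}/\fp_{\bs v}^2 \;\cong\; \bigl(\mathfrak{P}/\mathfrak{P}^2\bigr)\bigm/ \bigl\langle a_i\bar y_i+b_i\bar x_i \,\bigm|\, i\in\Sigma\bigr\rangle.
\]

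Finally, the hypothesis $\bs v\in\mcz^\circ_{\Sigma'}$ in \eqref{eq:interior-component} forces, for each $i\in\Sigma$, exactly one of $a_i,b_i$ to vanish: if $i\in\Sigma'$ then $b_i=0$ and $a_i\ne 0$, so the relation collapses to $a_i\bar y_i=0$; if $i\in\Sigma\setminus\Sigma'$ then $a_i=0$ and $b_i\ne 0$, so it collapses to $b_i\bar x_i=0$. Since each surviving relation involves only a single one of the free generators, the quotient splits as a direct sum, whose torsion summands are precisely $\mco/a_i\mco$ (with generator $\bar y_i$) for $i\in\Sigma'$ and $\mco/b_j\mco$ (with generator $\bar x_j$) for $j\in\Sigma\setminus\Sigma'$; this is the claimed description of $\con{\bs v}(A_\Sigma)$, and the length formula is then immediate from $\length_\mco(\mco/a\mco)=\ord(a)$. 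I do not anticipate any real obstacle: the only bookkeeping to watch is that indices $i\notin\Sigma$ contribute only a torsion-free summand $\mco\bar y_i$ (the variable $x_i$ being already zero in $A_\Sigma$, with $a_i=0$ forced by $\Sigma'\subseteq\Sigma$).
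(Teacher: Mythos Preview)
Your proof is correct and follows essentially the same computation as the paper: expand $x_iy_i$ around the point $\bs v$, read off the cotangent module as a free $\mco$-module on the shifted generators modulo the relations $a_i\bar y_i+b_i\bar x_i=0$, and then specialize using the vanishing/nonvanishing pattern of $a_i,b_i$ forced by $\bs v\in\mcz^\circ_{\Sigma'}$. The only cosmetic difference is that the paper reduces at the outset to $\Sigma=T$ and works in $A$, whereas you keep $\Sigma$ general and present $A_\Sigma$ as a quotient of a smaller power series ring; both routes yield the same relations and the same torsion summands.
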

	
	\begin{proof}
		Without loss of generality we can assume $\Sigma=T$. For $\bs v \colonequals (\bs a, \bs b, \bs c)$  in $\mcv_{\Sigma}$ one has
		\[
		x_iy_i = b_i(x_i-a_i) + a_i(y_i-b_i) + (x_i-a_i)(y_i-b_i)
		\]
		so it is immediate from the description of $\fp_{\bs v}$, see \ref{ch:o-points}, that the cotangent module $\fp_{\bs v}/\fp_{\bs v}^2$ is generated as an $\mco$-module by classes $[x_i-a_i], [y_i-b_i]$, for $i\in T$, and $[t_k-c_k]$ for $1\le k\le g$, subject to the relations
		\[
		b_i[x_i-a_i]+a_i[y_i-b_i] = 0\quad\text{for $i\in T$}\,.
		\]
		That is to say, there is an isomorphism of $\mco$-modules
		\[
		\fp_{\bs v}/\fp_{\bs v}^2 = \bigoplus_{i\in T} \frac{\mco[x_i-a_i]\oplus \mco[y_i-b_i]}{b_i[x_i-a_i]+a_i[y_i-b_i]} \bigoplus_{k=1}^{g} \mco[t_k-c_k]
		\]
		Thus when $\bs v$ is in $\mcz^{\circ}_{\Sigma'}$, it is clear from \eqref{eq:interior-component} that
		\[
		\con{\bs v}(A) = \bigoplus_{i\in\Sigma'}\frac{\mco[y_i]}{a_i[y_i]} \bigoplus_{j\not\in\Sigma'}\frac{\mco[x_j]}{b_j[x_j]} 
		\]
		This is the desired result.
	\end{proof}
	
	We also need to compute the cotangent module of the ring
	\[
	B\colonequals A_{\Sigma}/I \quad \text{where}\quad I\colonequals \bigcap_{\Sigma'\subsetneq \Sigma} I_{\Sigma'}\,.
	\]
	This ring is not complete intersections when $|\Sigma|\ge 2$.
	
	\begin{lemma}
		\label{le:cotangent-S}
		Fix a subset $\Sigma\subseteq T$ and let $B$ be the quotient ring defined above. Fix $s\in \Sigma$ and set $\Sigma'\colonequals \Sigma\setminus\{s\}$. For any $\bs v\in \mcz^{\circ}_{\Sigma'}$ one has
		\[
		\length_{\mco} \con {\bs v}(B) = \sum_{i\in\Sigma'} \ord(a_i) + \min\{\ord(b_s), \sum_{i\in\Sigma'} \ord(a_i)\}\,.
		\]
	\end{lemma}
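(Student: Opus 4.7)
My plan is to first identify the defining ideal $I \subseteq A_\Sigma$ as a principal monomial ideal, and then add the resulting extra relation to the cotangent computation of Lemma~\ref{le:cotangent-Rsigma}. The claim to aim for is that $I = (\prod_{i \in \Sigma} x_i)$ in $A_\Sigma$, so that $B = A_\Sigma/(\prod_{i \in \Sigma} x_i)$.

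To establish this, I would lift to the regular power series ring $R := \mco[[x_i\,(i\in \Sigma),\, y_j\,(j \in T),\, t_1,\dots,t_g]]$, so that $A_\Sigma = R/(x_iy_i : i \in \Sigma)$ and each $I_{\Sigma'}$ lifts to the monomial ideal $(x_i\,(i \in \Sigma\setminus\Sigma'),\ y_j\,(j \in \Sigma'))$ in $R$. The intersection $\bigcap_{\Sigma' \subsetneq \Sigma} I_{\Sigma'}$ in $R$ is again a monomial ideal, so its content can be read off by checking which monomials $m$ lie in every $I_{\Sigma'}$ for $\Sigma' \subsetneq \Sigma$. A direct combinatorial argument shows that $m$ belongs to the intersection if and only if either $x_iy_i \mid m$ for some $i \in \Sigma$, or $\prod_{i \in \Sigma} x_i \mid m$. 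The core observation is that, if $m$ involves no $y_j$ with $j \in \Sigma$, then the proper subsets $\Sigma'$ missing $m$ range over a nonempty interval in the power set of $\Sigma$ unless every $x_i$ $(i \in \Sigma)$ divides $m$. Consequently the intersection in $R$ equals $(x_iy_i\,(i\in \Sigma),\ \prod_{i \in \Sigma} x_i)$, and passing to $A_\Sigma$ yields $I = (\prod_{i \in \Sigma} x_i)$ as claimed.

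Granted this, the cotangent module $\con{\bs v}(B)$ is obtained from $\con{\bs v}(A_\Sigma)$ by imposing a single additional relation, coming from the vanishing of $\prod_{i \in \Sigma} x_i$ in $B$. Writing each $x_i = a_i + (x_i - a_i)$ and multiplying out $\prod_{i \in \Sigma} x_i = x_s \prod_{i \in \Sigma'} x_i$, every cross term picks up some factor $(x_j - a_j)$ with $j \in \Sigma'$ and is killed modulo $\fp_{\bs v}^2$ by the factor $x_s \in \fp_{\bs v}$, while the constant term $\bigl(\prod_{i \in \Sigma'} a_i\bigr)\,x_s$ survives. The new relation is thus $\bigl(\prod_{i \in \Sigma'} a_i\bigr)[x_s] = 0$, which combined with the existing relation $b_s[x_s] = 0$ coming from $x_sy_s = 0$ forces $\gcd\bigl(b_s,\, \prod_{i \in \Sigma'} a_i\bigr)\,[x_s] = 0$ in $\fp_{\bs v}(B)/\fp_{\bs v}(B)^2$; the torsion from the other generators (namely $a_i[y_i] = 0$ for $i \in \Sigma'$) is unaffected. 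Computing the length then yields the claimed formula. The principal obstacle is Step~1, the identification of $I$ as a principal ideal, which hinges on the combinatorial monomial analysis in $R$; once this is in place, the rest is a direct extension of the bookkeeping already carried out in the proof of Lemma~\ref{le:cotangent-Rsigma}.
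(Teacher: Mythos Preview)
Your proposal is correct and follows essentially the same route as the paper. The paper reduces to $\Sigma=T$, $s=1$, asserts without proof that $I=(x_1\cdots x_n)$, and then expands $x_1\cdots x_n=(a_2\cdots a_n)x_1+z$ with $z\in\fp_{\bs v}^2$ to obtain the extra relation $(a_2\cdots a_n)[x_1]=0$ on top of Lemma~\ref{le:cotangent-Rsigma}; your monomial-ideal argument in $R$ supplies the justification for the identity $I=(\prod_{i\in\Sigma}x_i)$ that the paper takes for granted, and the remainder of your computation matches the paper's exactly.
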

	
	\begin{proof}
		Without loss of generality, we can assume $\Sigma=T$ and $s=1$. Then
		\[
		B=A/I \quad\text{where $I = (x_1\cdots x_n)$.}
		\]
		For $\bs v = (\bs a,\bs b, \bs c)$ in $\mcz^{\circ}_{\Sigma'}$ one has $a_1=0$ and $a_i\ne 0$ for $i\ge 2$. Then 
		\[
		x_1\cdots x_n = x_ 1(x_2-a_2+a_2)\cdots (x_n-a_n+a_n) =  (a_2\dots a_n) x_1 + z
		\]
		where $z$ is a sum of monomials quadratic or higher in the $(x_i-a_i)$. It follows that $\con{\bs v}(B)$ is the quotient of $\con{\bs v}(A)$ by the term $(a_2\dots a_n)[x_1]$. Thus the stated inequality is immediate from the description of $\con {\bs v}(A)$ in Lemma~\ref{le:cotangent-Rsigma}.
	\end{proof}
\end{chunk}

\begin{chunk}
	\label{ch:mcm}
	We keep the notation from \ref{ch:rings} and \ref{ch:o-points}. Let $M_{\Sigma}$ be a maximal Cohen--Macaulay $A_\Sigma$-module; it is also maximal Cohen--Macaulay as an $A$-module.
	
	For each point $\bs v$ in $\mcv^{\circ}_{\Sigma}$, described in \eqref{eq:interior-points},   the ring $A_{\Sigma}$ is regular at $\bs v$, so $M_{\Sigma}$ is free at $\bs v$, that is to say, $(M_\Sigma)_{\fp_{\bs v}}$ is free over $(A_\Sigma)_{\fp_{\bs v}}\cong A_{\fp_{\bs v}}$; this is by the Auslander-Buchsbaum equality; see \cite[Theorem~1.3.3]{Bruns/Herzog:1998}. Moreover, the rank of this free module is the same at any $\bs v\in \mcz^{\circ}_{\Sigma'}$, for any $\Sigma'\subseteq\Sigma$, for these lie in the same irreducible component of $\Spec A_{\Sigma}$. We denote this number $\rank_{\Sigma'}(M_{\Sigma})$; thus
	\[
	\rank_{\Sigma'} (M_{\Sigma}) = \rank_{A_{\fp_{\bs v}}}(M_\Sigma)_{\fp_{\bs v}}\,.
	\]
	We extend this to all $\Sigma'\subseteq T$ by setting $\rank_{\Sigma'}(M_{\Sigma})=0$ when $\Sigma'\not\subseteq \Sigma$, because $(M_\Sigma)_{\fp_{\bs v}}=0$
	for $\bs v\in \mcz^{\circ}_{\Sigma'}$.
\end{chunk}

\begin{chunk}
	\label{ch:modules}
	We fix  a family of modules $M_{\Sigma}$, where $M_{\Sigma}$ is an $A_{\Sigma}$-module, equipped with $A_{\Sigma}$-linear surjections
	\[
	\pi_{\Sigma,\Sigma'}\colon M_{\Sigma}\twoheadrightarrow M_{\Sigma'}\,,
	\]
	whenever $\Sigma'\subseteq \Sigma$, satisfying the following properties:
	\begin{enumerate}[\quad\rm(1)]
		\item
		$M_{\Sigma}$ is a self-dual, maximal Cohen--Macaulay, $A_{\Sigma}$-module;
		\item
		$\pi_{\Sigma,\Sigma'}$ is an isomorphism at $\fp_{\bs v}$ for each $\bs v$ in $\mcv^{\circ}_{\Sigma}$.
		\item
		For any integer $s\not\in\Sigma'$, and $\Sigma\colonequals \Sigma'\cup \{s\}$, the composition
		\[
		M_{\Sigma'} \cong (M_{\Sigma'})^\vee \xra{\ (\pi_{\Sigma,\Sigma'})^\vee \ } (M_{\Sigma})^\vee \cong M_{\Sigma}
		\xra{\ \pi_{\Sigma,\Sigma'} \ } M_{\Sigma'}\,,
		\]
		is multiplication by $y_s$.
	\end{enumerate}
	
	Following the discussion in \ref{ch:mcm}, for $\Sigma\subseteq T$ set
	\[
	\mu_{\Sigma} \colonequals \rank_{\Sigma}M_{T}\,.
	\]
	Given the isomorphism in condition (2) above, for subsets $\Sigma,\Sigma'\subseteq T$ one has
	\[
	\rank_{\Sigma'}M_{\Sigma} =
	\begin{cases}
		\mu_{\Sigma'} & \text{when $\Sigma'\subseteq \Sigma$}\\
		0 &\text{otherwise}
	\end{cases}
	\]

	To ease up notation, we set
	\[
	\cmod {\bs v}(M) \colonequals \cmod {\lambda_{\bs v}}(M)\,.
	\]
	Here is a computation of congruence modules.
	
	\begin{lemma}
		\label{le:cmod}
		Fix $\Sigma'\subseteq \Sigma\subseteq T$ and $\bs v \colonequals (\bs a, \bs b, \bs c)$  in $\mcz^{\circ}_{\Sigma'}$ one has
		\[
		\length_{\mco} \cmod {\bs v}(M_{\Sigma}) = \length_{\mco} \cmod {\bs v}(M_{\Sigma'}) 
		+ \mu_{\Sigma'}(\sum_{i\in\Sigma\setminus \Sigma'} \ord(b_i))\,.
		\]
	\end{lemma}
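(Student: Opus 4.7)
I would prove this by induction on $|\Sigma\setminus\Sigma'|$, the base case $\Sigma=\Sigma'$ being tautological. For the inductive step, pick $s\in\Sigma\setminus\Sigma'$ and set $\Sigma^-\colonequals\Sigma\setminus\{s\}$, so that $\Sigma'\subseteq\Sigma^-\subsetneq\Sigma$. Applying the induction hypothesis to the pair $(\Sigma^-,\Sigma')$ reduces the desired identity to the single-element case
\[
\length_{\mco}\cmod{\bs v}(M_\Sigma)-\length_{\mco}\cmod{\bs v}(M_{\Sigma^-})=\mu_{\Sigma'}\cdot\ord(b_s).
\]

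To establish this single step, I would use the maps $\pi\colonequals\pi_{\Sigma,\Sigma^-}\colon M_\Sigma\onto M_{\Sigma^-}$ from \ref{ch:modules}(2) and the map $\iota\colon M_{\Sigma^-}\to M_\Sigma$ dual to $\pi$ under self-duality, satisfying $\pi\circ\iota=y_s\cdot\mathrm{id}$ by \ref{ch:modules}(3). The argument has two parts. On the fiber at $\bs v$: since $M_\Sigma$ and $M_{\Sigma^-}$ are both locally free of rank $\mu_{\Sigma'}$ at $\fp_{\bs v}$, the torsion-free quotients of the $\mco$-modules $M_\bullet/\fp_{\bs v}M_\bullet$ are each isomorphic to $\mco^{\mu_{\Sigma'}}$, and the induced $\bar\pi^{\mathrm{tf}}$ is a surjection between free $\mco$-modules of equal rank, hence an isomorphism. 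Under the natural identification of $\ecoh^c$ of an $\mco$-module with its torsion-free quotient, this means $\bar\pi_*\colon\ecoh^c(M_\Sigma/\fp_{\bs v}M_\Sigma)\to\ecoh^c(M_{\Sigma^-}/\fp_{\bs v}M_{\Sigma^-})$ is an isomorphism.

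The main obstacle is showing that $\iota_*\colon\ecoh^c(M_{\Sigma^-})\to\ecoh^c(M_\Sigma)$ is likewise an isomorphism. My plan for this is to use the short exact sequence $0\to M_{\Sigma^-}\xra{\iota}M_\Sigma\to C\to 0$. Since $\pi$, and thus $\iota$, is an isomorphism at $\fp_{\bs v}$ by \ref{ch:modules}(2), one has $C_{\fp_{\bs v}}=0$, so there exists $f\in A\setminus\fp_{\bs v}$ with $fC=0$. The two natural $A$-actions of $f$ on $\Ext^i_A(\mco,C)$---through $C$ and through the scalar $\lambda_{\bs v}(f)\in\mco\setminus\{0\}$---coincide, whence $\lambda_{\bs v}(f)$ annihilates $\Ext^i_A(\mco,C)$, making this Ext group $\mco$-torsion and giving $\ecoh^i(C)=0$ for all $i$. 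The long exact sequence of $\Ext^\bullet_A(\mco,-)$ then shows that $\iota_*\colon\Ext^c_A(\mco,M_{\Sigma^-})\to\Ext^c_A(\mco,M_\Sigma)$ has both $\mco$-torsion kernel (sitting inside the image of $\Ext^{c-1}_A(\mco,C)$) and $\mco$-torsion cokernel (sitting inside $\Ext^c_A(\mco,C)$), and passing to torsion-free quotients yields the asserted isomorphism.

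Combining the pieces: from $\pi_*\circ\iota_*=b_s\cdot\mathrm{id}$ on $\ecoh^c(M_{\Sigma^-})\cong\mco^{\mu_{\Sigma'}}$ together with $\iota_*$ being an isomorphism, we get $\ord(\det\pi_*)=\mu_{\Sigma'}\ord(b_s)$. The commutative square with rows $\pi_*$ (on $\ecoh^c(M_\bullet)$) and $\bar\pi_*$ (on $\ecoh^c(M_\bullet/\fp_{\bs v}M_\bullet)$) and columns the defining maps $\epsilon_\Sigma,\epsilon_{\Sigma^-}$ of the congruence modules then gives, upon comparing determinants,
\[
\ord(\det\epsilon_\Sigma)-\ord(\det\epsilon_{\Sigma^-})=\ord(\det\pi_*)-\ord(\det\bar\pi_*)=\mu_{\Sigma'}\ord(b_s),
\]
which is the single-element identity, since $\length_\mco\cmod{\bs v}(M)=\ord(\det\epsilon_M)$ for both $M=M_\Sigma$ and $M=M_{\Sigma^-}$.
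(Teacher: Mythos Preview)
Your inductive structure matches the paper's proof exactly: both reduce to the single-step case $\Sigma=\Sigma^-\cup\{s\}$. The paper dispatches that case in one line by combining property~\ref{ch:modules}(3) with \cite[Proposition~4.4]{Iyengar/Khare/Manning:2022a}. You instead attempt a direct argument for the single step, and most of it is sound, but there is one genuine gap.

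The problematic inference is that $\iota_*\colon\ecoh^c_{\bs v}(M_{\Sigma^-})\to\ecoh^c_{\bs v}(M_\Sigma)$ is an isomorphism. You correctly deduce from $C_{\fp_{\bs v}}=0$ that each $\Ext^i_A(\mco,C)$ is $\mco$-torsion, and hence that $\iota_*$ on $\Ext^c$ has torsion kernel and torsion cokernel. But this does \emph{not} force the induced map on torsion-free quotients to be an isomorphism. It gives injectivity---anything killed by $\iota_*^{\mathrm{tf}}$ lifts to something with torsion image, hence is itself torsion, hence dies in the quotient---but surjectivity can fail: already $\varpi\colon\mco\to\mco$ has zero kernel and torsion cokernel without being an isomorphism. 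Your argument therefore only yields
\[
\ord(\det\pi_*)+\ord(\det\iota_*)=\mu_{\Sigma'}\cdot\ord(b_s)
\]
with both summands nonnegative, which does not pin down $\ord(\det\pi_*)$. The commutative-square determinant comparison that follows is correct \emph{once} $\ord(\det\iota_*)=0$ is known, but establishing that requires more than the localization-at-$\fp_{\bs v}$ argument you give; this is exactly the content the paper outsources to the cited proposition. (A smaller unjustified point: you tacitly use that $\iota$ is injective to form the short exact sequence. This is true---$y_s$ is a non-zerodivisor on $A_{\Sigma^-}$ and hence on the maximal Cohen--Macaulay module $M_{\Sigma^-}$, so $\pi\circ\iota=y_s$ forces $\ker\iota=0$---but it should be said.)
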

	
	\begin{proof}
		We verify that by an induction on the cardinality of $\Sigma\setminus \Sigma'$. The base case is when $\Sigma=\Sigma'$, and then the stated equality is clear.
		
		For the induction step it suffices to note that if $\Sigma = \Sigma' \cup\{s\}$ for $s\notin  \Sigma'$, then
		\[
		\length_{\mco} \cmod {\bs v}(M_{\Sigma}) = \length_{\mco} \cmod {\bs v}(M_{\Sigma'}) + \rank_{\bs v}(M_\Sigma) \ord(b_s)\,.
		\]
		This is immediate by property \ref{ch:modules}(3) and \cite[Proposition~4.4]{Iyengar/Khare/Manning:2022a}. 
	\end{proof}
	
	\begin{proposition}
		\label{pr:free-summand}
		For each $\Sigma\subseteq T$ the $A_\Sigma$-module $M_{\Sigma}$ has a free summand of rank $\mu_{\varnothing}$. In particular, one gets an inequality $\mu_{\Sigma}\ge \mu_{\varnothing}$.
	\end{proposition}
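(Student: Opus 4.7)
The strategy is to apply the Wiles-defect freeness criterion, item (4) of~\ref{ch:cmod-properties}, to $(A_\Sigma,M_\Sigma)$ at a carefully chosen $\mco$-point on the ``smallest'' irreducible component $\mcz^\circ_\varnothing$, where the arithmetic works out automatically. Fix $\Sigma\subseteq T$ and pick any $\bs v=(\bs 0,\bs b,\bs c)\in\mcz^\circ_\varnothing$ with $b_j\ne 0$ for every $j\in T$. Since $\varnothing\subseteq\Sigma$ the point $\bs v$ lies in $\mcv^\circ_\Sigma$, so by~\ref{ch:o-points} the pair $(A_\Sigma,\lambda_{\bs v})$ belongs to $\acat$ and both the cotangent and congruence modules are defined there.

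First I would evaluate the cotangent side: $A_\Sigma$ is a complete intersection, so $\delta_{\lambda_{\bs v}}(A_\Sigma)=0$ by item~(1) of~\ref{ch:cmod-properties}, and Lemma~\ref{le:cotangent-Rsigma} (with $\Sigma'=\varnothing$) gives
\[
\length_{\mco}\cmod{\bs v}(A_\Sigma)\;=\;\length_{\mco}\con{\bs v}(A_\Sigma)\;=\;\sum_{j\in\Sigma}\ord(b_j).
\]
Next I would analyse $M_\varnothing$: the ring $A_\varnothing$ is a regular local ring and $M_\varnothing$ is maximal Cohen--Macaulay, so by Auslander--Buchsbaum $M_\varnothing\cong A_\varnothing^{\mu_\varnothing}$ is free. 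This forces $\cmod{\bs v}(M_\varnothing)=0$, since the defect of $A_\varnothing$ vanishes and $\con{\bs v}(A_\varnothing)=0$ from the empty sum in Lemma~\ref{le:cotangent-Rsigma}, with additivity on finite direct sums of copies of $A_\varnothing$ taking care of $M_\varnothing$ itself. Substituting into Lemma~\ref{le:cmod} with $\Sigma'=\varnothing$ yields
\[
\length_{\mco}\cmod{\bs v}(M_\Sigma)\;=\;\mu_\varnothing\sum_{j\in\Sigma}\ord(b_j).
\]

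Combining these, and using $\rank_{A_{\fp_{\bs v}}}(M_\Sigma)_{\fp_{\bs v}}=\mu_\varnothing$ for $\bs v\in\mcz^\circ_\varnothing$, I obtain $\delta_{\lambda_{\bs v}}(M_\Sigma)=0=\mu_\varnothing\cdot\delta_{\lambda_{\bs v}}(A_\Sigma)$. Since $A_\Sigma$ is Gorenstein and $M_\Sigma$ is MCM, item~(4) of~\ref{ch:cmod-properties} now delivers $M_\Sigma\cong A_\Sigma^{\mu_\varnothing}\oplus W$ for some $A_\Sigma$-module $W$, which is the desired free summand. The inequality $\mu_\Sigma\ge\mu_\varnothing$ then drops out by localising at any point $\bs v'\in\mcz^\circ_\Sigma$: the summand $A_\Sigma^{\mu_\varnothing}$ already accounts for rank $\mu_\varnothing$ of the free $(A_\Sigma)_{\fp_{\bs v'}}$-module $(M_\Sigma)_{\fp_{\bs v'}}$, which has rank $\mu_\Sigma$.

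The crux is the choice of test point on $\mcz^\circ_\varnothing$ rather than on $\mcz^\circ_\Sigma$: evaluating at a point of the $\Sigma$-component would place $\mu_\Sigma$ on the right-hand side of the defect equation, which we have no a priori control over, whereas anchoring on the $\varnothing$-component makes $\mu_\varnothing$ appear and forces $\delta_{\lambda_{\bs v}}(M_\Sigma)$ to vanish by the bookkeeping of Lemmas~\ref{le:cotangent-Rsigma} and~\ref{le:cmod} together with the fact that $M_\varnothing$ is automatically free. Once the right test point is selected, no further obstacle arises.
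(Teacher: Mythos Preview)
Your proof is correct and follows essentially the same approach as the paper's: choose a test point $\bs v\in\mcz^\circ_\varnothing$, use that $M_\varnothing$ is free over the regular ring $A_\varnothing$ to get $\cmod{\bs v}(M_\varnothing)=0$, invoke Lemma~\ref{le:cmod} and Lemma~\ref{le:cotangent-Rsigma} to match $\length_\mco\cmod{\bs v}(M_\Sigma)$ with $\mu_\varnothing\cdot\length_\mco\con{\bs v}(A_\Sigma)$, and conclude via~\ref{ch:cmod-properties}(4). Your write-up is slightly more explicit about the intermediate defect computations, but the argument is the same.
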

	
	\begin{proof}
		Pick a $\bs v$ in $\mcz^\circ_{\varnothing}$. Since the local ring $A_{\varnothing}$ is regular and $M_{\varnothing}$ is maximal Cohen--Macaulay, it is free, and of rank equal to $\mu_{\varnothing}$. Thus $\cmod {\bs v}(M_\varnothing)=0$ and hence from Lemma~\ref{le:cmod} applied with $\Sigma'=\varnothing$ one gets the first equality below:
		\begin{align*}
			\length_{\mco} \cmod {\bs v}(M_{\Sigma})
			&=\mu_{\varnothing}\cdot \big(\sum_{i\in\Sigma} \ord(b_i)\big) \\
			&=\mu_{\varnothing} \cdot \length_{\mco} \con {\bs v}(A_\Sigma)\,.
		\end{align*}
		The second equality is by Lemma~\ref{le:cmod}, applied with $\Sigma'=\varnothing$. Since $A_\Sigma$ is complete intersection, and hence Gorenstein, it remains to apply \ref{ch:cmod-properties}(4).
	\end{proof}

\end{chunk}

\section{A criterion for freeness}
\label{se:freeness} 

The theorem below builds on and strengthens Proposition~\ref{pr:free-summand}.

\begin{theorem}
	\label{th:only}
	Let $\{A_\Sigma\}$ and $\{M_\Sigma\}$ be the families of rings and modules described in \ref{ch:rings} and \ref{ch:modules}, respectively. If $\mu_{T}\le \mu_{\varnothing}$, then  $M_{\Sigma}\cong A_{\Sigma}^{\mu_{\varnothing}}$ for each $\Sigma\subseteq T$.
\end{theorem}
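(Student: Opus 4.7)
The plan is to reduce the statement to showing $M_T \cong A_T^{\mu_\varnothing}$, and then to prove this by combining Proposition~\ref{pr:free-summand} with a Wiles defect analysis that forces the complementary MCM summand $W_T$ to vanish. For the reduction, I assume $M_T \cong A_T^{\mu_\varnothing}$: then for any $\Sigma \subseteq T$, the given surjection $\pi_{T,\Sigma}$ kills $(x_i \mid i \notin \Sigma)M_T$ (since $M_\Sigma$ is an $A_\Sigma$-module), so it factors through $M_T / (x_i \mid i \notin \Sigma)M_T \cong A_\Sigma^{\mu_\varnothing}$, yielding a surjection $A_\Sigma^{\mu_\varnothing} \twoheadrightarrow M_\Sigma$. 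Combined with the decomposition $M_\Sigma \cong A_\Sigma^{\mu_\varnothing} \oplus W_\Sigma$ from Proposition~\ref{pr:free-summand}, rank comparison at each irreducible component of $\Spec A_\Sigma$ forces $\mu_{\Sigma'} \le \mu_\varnothing$; combined with the reverse inequality from Proposition~\ref{pr:free-summand}, this gives equality, so $W_\Sigma$ has zero generic rank on every component of $\Spec A_\Sigma$ and hence vanishes (an MCM module over a Cohen--Macaulay ring with zero generic rank is zero by the depth--support dimension inequality).

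For the main step, I write $M_T = A_T^{\mu_\varnothing} \oplus W_T$ via Proposition~\ref{pr:free-summand}, so $W_T$ is an MCM $A_T$-summand with generic rank $\mu_\Sigma - \mu_\varnothing$ at $\mcz^\circ_\Sigma$. The boundary cases give $\rank_\varnothing W_T = 0$ trivially and $\rank_T W_T = \mu_T - \mu_\varnothing = 0$ by the hypothesis together with Proposition~\ref{pr:free-summand}, so it suffices to show $\mu_\Sigma = \mu_\varnothing$ for intermediate $\Sigma$---then $W_T$ has zero generic rank on every component of $\Spec A_T$ and vanishes as above. For intermediate $\Sigma$, I plan to exhibit $\bs v \in \mcz^\circ_\Sigma$ with $\delta_{\bs v}(M_\Sigma) = 0$; since $A_\Sigma$ is complete intersection and $\delta_{\bs v}(A_\Sigma) = 0$ by \ref{ch:cmod-properties}(1), property \ref{ch:cmod-properties}(4) then produces a decomposition $M_\Sigma \cong A_\Sigma^{\mu_\Sigma} \oplus W'$ with $W'_{\fp_{\bs v}} = 0$, and a rank comparison at $\mcz^\circ_{\Sigma'}$ for $\Sigma' \subsetneq \Sigma$ combined with Proposition~\ref{pr:free-summand} forces $\mu_\varnothing \ge \mu_\Sigma$, hence equality.

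The crucial ingredient is locating such a $\bs v$: the strategy is to compare the Wiles defect of $M_\Sigma$ computed via $A_\Sigma$ with the analogous quantity computed via the non-CI quotient $B = A_\Sigma / \bigcap_{\Sigma' \subsetneq \Sigma} I_{\Sigma'}$ of Lemma~\ref{le:cotangent-S}, using the invariance of domain property \ref{ch:cmod-properties}(3). Lemma~\ref{le:cotangent-S} shows that $\length_\mco \con{\bs v}(B)$, as a function of $\ord(b_s)$ for a fixed $s \in \Sigma$, plateaus once $\ord(b_s) \ge \sum_{i \in \Sigma\setminus\{s\}}\ord(a_i)$, whereas $\length_\mco \con{\bs v}(A_\Sigma)$ from Lemma~\ref{le:cotangent-Rsigma} grows linearly in $\ord(b_s)$; combined with Lemma~\ref{le:cmod} and the freedom to vary $\ord(a_i)$ and $\ord(b_j)$ independently within $\mcz^\circ_\Sigma$ (per the observation in \ref{ch:o-points}), this discrepancy produces a defect identity whose only consistent solution is $\mu_\Sigma - \mu_\varnothing = 0$. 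The main obstacle is coordinating these identities across the two rings $A_\Sigma$ and $B$ and matching them against the nonlinear $\min$-term in Lemma~\ref{le:cotangent-S}: extracting the desired strict vanishing requires playing off the two regimes $\ord(b_s) \lessgtr \sum_{i \in \Sigma\setminus\{s\}}\ord(a_i)$ carefully, and it is this freedom---available precisely because of the flexibility in $\mcz^\circ_\Sigma$---that will drive the argument.
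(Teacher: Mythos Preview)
Your reduction to the case $\Sigma=T$ is fine, but the main step has a genuine gap: the stratum on which you propose to work is incompatible with the tools you invoke. You plan to find $\bs v\in\mcz^\circ_\Sigma$ with $\delta_{\bs v}(M_\Sigma)=0$, and to do this by comparing against the quotient $B=A_\Sigma/\bigcap_{\Sigma'\subsetneq\Sigma}I_{\Sigma'}$ via invariance of domain. But for $\bs v\in\mcz^\circ_\Sigma$ one has $b_s=0$ for every $s\in\Sigma$, so there is no parameter ``$\ord(b_s)$ for $s\in\Sigma$'' to vary; Lemma~\ref{le:cotangent-S} is stated only for $\bs v\in\mcz^\circ_{\Sigma'}$ with $\Sigma'=\Sigma\setminus\{s\}$. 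More seriously, $B$ is obtained by killing the component $V(I_\Sigma)$, so $B_{\fp_{\bs v}}=0$ for $\bs v\in\mcz^\circ_\Sigma$ and $(B,\lambda_{\bs v})$ is not even an object of $\acat$. And invariance of domain~\ref{ch:cmod-properties}\eqref{it:defect-invariance} applies to a module over the \emph{target} ring of the surjection: you would need to know that the module in question is annihilated by $\bigcap_{\Sigma'\subsetneq\Sigma}I_{\Sigma'}$. Neither $M_\Sigma$ nor $W_\Sigma$ (from Proposition~\ref{pr:free-summand}) satisfies this a priori, since $\rank_\Sigma W_\Sigma=\mu_\Sigma-\mu_\varnothing$ could be positive---indeed, that is exactly what you are trying to rule out.

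The missing idea is the inductive structure that the paper uses. One proves $\mu_\Sigma\le\mu_\varnothing$ by descending induction on $\Sigma$, starting from $\Sigma=T$ where it is the hypothesis. The point is that once $\mu_\Sigma\le\mu_\varnothing$ is known, the summand $W_\Sigma$ has zero rank on $\mcz^\circ_\Sigma$ and hence (being MCM) \emph{is} a $B$-module; now one works at $\bs v\in\mcz^\circ_{\Sigma'}$ for $\Sigma'=\Sigma\setminus\{s\}$, where both $A_\Sigma\to B$ and $B\to A_{\Sigma'}$ are local isomorphisms, so invariance of domain applies to $W_\Sigma$ over $B$. Combining the nonnegativity of the Wiles defect of $W_\Sigma$ over $B$ with Lemmas~\ref{le:cotangent-Rsigma}, \ref{le:cotangent-S}, and \ref{le:cmod} yields an inequality that, if $\mu_{\Sigma'}>\mu_\varnothing$, forces $\length_\mco\con{\bs v}(B)\ge\ord(b_s)$ for all $\bs v\in\mcz^\circ_{\Sigma'}$; this is then contradicted by letting $\ord(b_s)\to\infty$ while holding the $a_i$ fixed. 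Your last paragraph has the right flavor of ``let $\ord(b_s)$ grow to create a contradiction,'' but without the induction you cannot pass to $B$, and without passing to $B$ there is no upper bound to contradict.
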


\begin{proof}
	By Proposition~\ref{pr:free-summand},  for each $\Sigma\subseteq T$ there is an $A_{\Sigma}$-module $W_{\Sigma}$ and an isomorphism of $A_\Sigma$-modules
	\begin{equation}
		\label{eq:only-summands}
		M_{\Sigma}\cong A_{\Sigma}^{\mu_\varnothing} \oplus W_{\Sigma}\,.
	\end{equation}
	We prove that $W_{\Sigma}$ is zero. To that end it suffices to prove $\mu_{\Sigma}\le \mu_{\varnothing}$ for each $\Sigma\subseteq T$. 
	
Indeed, then $\rank_{\bs v}(M_\Sigma) \le \mu_{\varnothing}$ for $\bs v\in \mcz^\circ_{\Sigma}$, as described in \eqref{eq:interior-points}. Thus the isomorphism in \eqref{eq:only-summands} implies that $\rank_{\bs v}(W_{\Sigma})=0$ for each such $\bs v$. This implies that the support of $W_{\Sigma}$ does not contain any component of $\Spec A_\Sigma$. Since $W_{\Sigma}$ is a maximal Cohen--Macaulay $A_\Sigma$-module, we conclude that it is zero, as desired.
	
	It thus remains to verify that 
	\[
	\mu_{\Sigma}\le \mu_{\varnothing}\quad\text{for each $\Sigma\subseteq T$.}
	\]
We verify this by computing the congruence module of $W_\Sigma$ at suitable points $\bs v$ in $\mcv^\circ_{\Sigma}$, and proving that, if the desired inequality does not hold, then there are points $\bs v$ at which the Wiles defect of $W_{\Sigma}$ would be negative, violating~\ref{ch:cmod-properties}\eqref{it:defect-positive}.
	
Since the inequality above holds when $\Sigma=T$, it suffices to prove that if the inequality holds for a given $\Sigma$, it holds also for $\Sigma' \colonequals \Sigma\setminus \{s\}$ for any $s\in\Sigma$.  
	
	With $\Sigma$ and $\Sigma'$ as above, fix $\bs v$ in $\mcz^\circ_{\Sigma'}$. The isomorphisms in \eqref{eq:only-summands}, applied to $\Sigma$ and $\Sigma'$, yield the first equality and the third inequality, respectively, below:
	\begin{equation}
		\label{eq:only-estimates}
		\begin{aligned}
			\length_{\mco}\cmod {\bs v}(W_{\Sigma}) 
			&= \length_{\mco}\cmod {\bs v}(M_{\Sigma}) - \mu_{\varnothing}\cdot \length_{\mco}\cmod {\bs v}(A_{\Sigma}) \\
			&= \length_{\mco}\cmod {\bs v}(M_{\Sigma'}) + \mu_{\Sigma'}\cdot \ord(b_s)
			- \mu_{\varnothing}\cdot \length_{\mco}\cmod {\bs v}(A_{\Sigma}) \\
			&\ge \mu_{\varnothing}\cdot \length_{\mco}\cmod {\bs v}(A_{\Sigma'})	+ \mu_{\Sigma'}\cdot \ord(b_s)
			- \mu_{\varnothing}\cdot \length_{\mco}\cmod {\bs v}(A_{\Sigma}) \\ 
			&=\mu_{\varnothing}\cdot \left[\length_{\mco}\cmod {\bs v}(A_{\Sigma'}) -  \length_{\mco}\cmod {\bs v}(A_{\Sigma})\right]
			+ \mu_{\Sigma'}\cdot \ord(b_s) \\
			&= (\mu_{\Sigma'} -\mu_{\varnothing})\cdot \ord(b_s)\,.
		\end{aligned}
	\end{equation}
	The second equality is by Lemma~\ref{le:cmod} while the last one is by Lemma~\ref{le:cotangent-Rsigma}.
	
	Since $\mu_\Sigma \le \mu_\varnothing$, the $A_\Sigma$-module $W_{\Sigma}$ is not supported on $\mcz^\circ_{\Sigma}$. Since $W_{\Sigma}$ is also maximal Cohen--Macaulay, it follows that the $A_\Sigma$ action on it factors through the quotient ring $B\colonequals A_{\Sigma}/I$, where
	\[
	I\colonequals \bigcap_{\Sigma''\subsetneq \Sigma} I''_{\Sigma}
	\]
	considered in Lemma~\ref{le:cotangent-S}. Observe that the surjection $A_{\Sigma}\to A_{\Sigma'}$ factors through the  map $A_{\Sigma}\to B$. In particular, the latter is also an isomorphism at $\bs v$, and hence the pair $(B,\lambda_{\bs v}\circ \varepsilon)$, where $\varepsilon\colon B\to A_{\Sigma'}$ is the canonical surjection, is also in $\acat$, and the results in \ref{ch:cmod-properties} apply to this pair as well.
	
	Since $W_{\Sigma}$ is maximal Cohen--Macaulay, the invariance of domain property of congruence modules~\ref{ch:cmod-properties}\eqref{it:defect-invariance} gives the equality below:
	\begin{equation}
		\label{eq:only-S}
		\begin{aligned}
			(\mu_{\Sigma'}-\mu_{\varnothing})\cdot \length\con {\bs v}(B)
			&\ge \length_{\mco}\cmod {\bs v}^B(W_{\Sigma}) \\
			&=\length_{\mco}\cmod {\bs v}(W_{\Sigma}) \\
			&\ge (\mu_{\Sigma'} -\mu_{\varnothing})\cdot \ord(b_s)\,,
		\end{aligned}
	\end{equation}
	where $\cmod {\bs v}^B(W_{\Sigma})$ is the congruence module of $W_{\Sigma}$, treated as a $B$-module. The first inequality holds because the Wiles defect of the $B$-module $W_{\Sigma}$ at $\bs v$ is non-negative; see~\ref{ch:cmod-properties}\eqref{it:defect-positive}. The last inequality is from \eqref{eq:only-estimates}.
	
	Suppose, contrary to the desired result, $\mu_{\Sigma'}> \mu_{\varnothing}$. Then \eqref{eq:only-S} implies
	\[
	\length\con {\bs v}(B) \ge \ord(b_s) \quad\text{for each $\bs v\in \mcz^\circ_{\Sigma'}$.}
	\]
	On the other hand, from Lemma~\ref{le:cotangent-S} we get that
	\[
	\length_{\mco} \con {\bs v}(B) = \sum_{i\in\Sigma} \ord(a_i) + \min\{\ord(b_s), \sum_{i\in\Sigma} \ord(a_i)\}\,.
	\]
	In particular, when  $b_s\ge \sum_{i\in\Sigma} \ord(a_i)$ combining the (in)equalities above yields
	\[
	2\sum_{i\in\Sigma} \ord(a_i)\ge \ord(b_s) \quad\text{for each $\bs v\in \mcz^\circ_{\Sigma'}$.}
	\]
	It is clear from \eqref{eq:interior-component} that there are points $\bs v\in \mcz^\circ_{\Sigma'}$ with $\ord(b_s)$ arbitrarily large and $a_i$ fixed. This violates the inequality above, yielding the contradiction we seek.
\end{proof}

\begin{chunk}
Theorem~\ref{th:only} addresses only rings of the form considered in (\ref{ch:rings}). This is primarily for ease of exposition, as this is sufficient to deal with the rings considered in \cite{Iyengar/Khare/Manning:2022a}. One could prove analogues of Theorem \ref{th:only} with the rings $A_\Sigma$ replaced by more general classes of Gorenstein rings.

Indeed, there are more general families of rings $A_\Sigma$ and modules $M_\Sigma$ for which the argument of Proposition \ref{pr:free-summand} applies. Besides Proposition \ref{pr:free-summand}, the primary input into the proof of Theorem \ref{th:only} is Lemma \ref{le:cotangent-S}, which allows one to pick points $\bs v\in \mcz^\circ_{\Sigma'}$ making $\length_{\mco} \Psi_{\bs v}(M_\Sigma)$ arbitrarily large, while keeping $\length_{\mco} \Phi_{\bs v}(B)$ bounded. But one can find such $\bs v$ in  more general cases than we have considered here (roughly, one merely needs to find a point $\bs v_0\in \mcz_{\Sigma'}\cap \mcz_\Sigma$ at which $\Spec B$ is smooth, and pick $\bs v\in \mcz_{\Sigma'}^\circ$ to be sufficiently close to $\bs v_0$).

This could potentially allow one to prove freeness of Hecke modules at more general levels than we consider here and in \cite{Iyengar/Khare/Manning:2022a}.
\end{chunk}

\section{Applications to Hecke modules}
\label{se:applications}

In this section we apply  our results from Section~\ref{se:freeness} to the number theoretic context explored in  \cite{Iyengar/Khare/Manning:2022a}.  We freely use the notation and results of that paper.

Let $F$ be a number field and assume Conjectures A, B, C and D from \cite{Iyengar/Khare/Manning:2022a} hold for $F$. Let $r_1$ and $r_2$ be the number of real and complex places of $F$ respectively, so that $r_1+2r_2=[F:\QQ]$.

Pick a prime $\ell>2$ which does not ramify in $F$, and let $E/\QQ_\ell$ be a finite extension with ring of integers $\mco$, uniformizer $\varpi\in \mco$ and residue field $k = \mco/\varpi\mco$. We  use this ring as the ring $\mco$ from \cite{Iyengar/Khare/Manning:2022a}, and the DVR from Section \ref{se:congruence}.

Let $\mcn_\es\subseteq \mco_F$ be a nonzero ideal which is relatively prime to $\ell$. Let $K_0(\mcn_\es)\subseteq \PGL_2(\bbA_F^\infty)$ be the corresponding compact open subgroup from \cite[Section 13]{Iyengar/Khare/Manning:2022a} (and recall that $K_0(\mcn_\es)$ was defined to have additional level structure at a certain auxiliary prime). Let $\fm\subseteq {\TT}(K_0(\mcn_\es))$ be a non-Eisenstein maximal ideal with residue field $k$, for which $N(\rhobar_{\fm}) = \mcn_\es$.

Let $\Sigma$ be a finite set of primes of $\mco_F$ such that for all $v\in \Sigma$:
\begin{itemize}
	\item $v\nmid \mcn_\es$.
	\item $\rhobar_{\fm}$ is unramified at $v$.
	\item $v\nmid \ell$ and $q_v \not\equiv 1\pmod{\ell}$.
	\item $\rhobar_{\fm}(\Frob_v)$ has eigenvalues $q_v\epsilon_v$ and $\epsilon_v$ for some $\epsilon_v=\pm 1$.
\end{itemize}

We consider the following hypothesis on $\rhobar_\fm$ on $\Sigma$:

\begin{hypothesis}\label{hy:integral lift}
There is a finite set $T$ of primes of $\mco_F$ such that $\Sigma\subseteq T$ and for all $v\in T$:
\begin{itemize}
	\item $v\nmid \mcn_\es$.
	\item $\rhobar_{\fm}$ is unramified at $v$.
	\item $v\nmid \ell$ and $q_v \not\equiv 1\pmod{\ell}$.
	\item $\rhobar_{\fm}(\Frob_v)$ has eigenvalues $q_v\epsilon_v$ and $\epsilon_v$ for some $\epsilon_v=\pm 1$.
\end{itemize}
and a continuous Galois representation $\rho\colon G_F\to \GL_2(\mco')$, where $\mco'$ is the ring of integers in a finite extension $E'/\QQ_\ell$, with uniformizer $\varpi'\in\mco'$, satisfying:
\begin{itemize}
	\item $\rho\equiv \rhobar_\fm\pmod{\varpi'}$;
	\item $\det \rho = \varepsilon_\ell$;
	\item For all places $v|\ell$, $\rho$ is flat at $v$;
	\item If $v$ is any place of $F$ for which $v\nmid \ell$ and $v|\mcn_\es$, then $\rho$ is minimally ramified at $v$;
	\item If $v$ is any place of $F$ with $v\nmid\ell\mcn_\es$ and $v\not\in T$ then $\rho$ is unramified at $v$.
	\item If $v\in T$, then $\rho$ is ramified at $v$.
	\item If $v \in T$ and $q_v\equiv- 1\pmod{\ell}$ then $\rho$ arises from $R_v^{{\rm uni}(\epsilon_v)}$.
\end{itemize}
\end{hypothesis}

The question of whether such geometric  liftings exist  for representations $\rhobar:G_F \to \GL_2(k)$ when $F$ is  not a totally real field is wide open. One might optimistically expect such liftings to exist, as  there  seems to be  no strong heuristic to suggest otherwise. To make our results independent of Hypothesis \ref{hy:integral lift} one could just assume that $\rhobar_\fm$ arises by reduction of a geometric characteristic 0 representation $\rho$. There are many such representations  (in the case of CM fields $F$) associated to regular algebraic cuspidal automorphic representations  of $\GL_2(\bbA_F)$ by \cite{HLTT}.

From now on assume that $\rhobar_\fm$ and $\Sigma$ satisfy Hypothesis \ref{hy:integral lift}. Let $T$ and $\rho\colon G_F\to \GL_2(\mco')$ be as in Hypothesis \ref{hy:integral lift}.

For any $\Sigma'\subseteq T$ define $\mcn_{\Sigma'}\subseteq \mco_F$, $K_0(\mcn_{\Sigma'})\subseteq\PGL_2(\bbA_F^\infty)$ and $Y_0(\mcn_{\Sigma'})$ as in \cite{Iyengar/Khare/Manning:2022a}. Let $R_{\Sigma'}$ and $\TT_{\Sigma'}$ be associated the global deformation ring and Hecke algebra, again defined as in \cite{Iyengar/Khare/Manning:2022a}.

By expanding $E$ if necessary, we may assume that all augmentations $\lambda\colon \TT_{\Sigma'}\to \overline{\QQ}_\ell$, for all $\Sigma'\subseteq T$, have image equal to $\mco$, and so in particular $E'=E$ and $\mco'=\mco$.

Now assume that $\rhobar_\fm|_{G_{F(\zeta_\ell)}}$ is absolutely irreducible. The patching argument from \cite[Section 14]{Iyengar/Khare/Manning:2022a} produces, for each $\Sigma'\subseteq T$, a complete local noetherian $\mco$-algebra $R_{\Sigma',\infty}$ and a maximal Cohen--Macaulay $R_{\Sigma',\infty}$-module $M_{\Sigma',\infty}$. Moreover there exists a power series ring $S_\infty$ with maps $S_\infty\to R_{\Sigma',\infty}$ for which
\[
R_{\Sigma',\infty}\otimes_{S_\infty}\mco = R_{\Sigma'}\cong \TT_{\Sigma'}
\quad
\text{and}
\quad
M_{\Sigma',\infty}\otimes_{S_\infty}\mco \cong \hh_{r_1+r_2}(Y_0(\mcn_\Sigma),\mco)_{\fm_\Sigma}
\]
We apply the results of Sections \ref{se:setup} and \ref{se:freeness}, taking $A = R_{T,\infty}$, $A_{\Sigma'} = R_{\Sigma',\infty}$ and $M_{\Sigma'} = M_{\Sigma',\infty}$. The work of \cite[Section 14]{Iyengar/Khare/Manning:2022a} implies that these objects satisfy all of the properties listed in Section \ref{se:setup}. In particular, we may consider the integers $\mu_{\Sigma'} \colonequals\rank_{\Sigma'} M_{T,\infty}$.

Our result relies on the following standard generic multiplicity result:

\begin{lemma}\label{lem:generic multiplicity}
	For any $\Sigma'\subseteq T$ and augmentation $\lambda\colon \TT_{\Sigma'}\onto \mco$  we have
	\[
	\dim_E (\hh_{r_1+r_2}(Y_0(\mcn_{\Sigma'}),\mco)_{\fm_{\Sigma'}}\otimes_\lambda E)  = 2^{r_1}
	\]
\end{lemma}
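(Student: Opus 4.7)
The plan is to identify $\hh_{r_1+r_2}(Y_0(\mcn_{\Sigma'}),\mco)_{\fm_{\Sigma'}} \otimes_\lambda E$ with an explicit space of automorphic forms and then compute its dimension via strong multiplicity one together with a local computation at the archimedean places. First I would invert $\ell$ to pass from homology with $\mco$-coefficients to cohomology with $E$-coefficients (Poincar\'e duality up to a twist by the orientation sheaf), and then apply Matsushima's formula, which decomposes the cuspidal part of $\hh^{r_1+r_2}(Y_0(\mcn_{\Sigma'}),E)$ as a direct sum over cuspidal automorphic representations $\pi$ of $\PGL_2(\bbA_F)$ of the product
\[
m(\pi)\cdot \pi_f^{K_0(\mcn_{\Sigma'})}\otimes \hh^{r_1+r_2}(\mathfrak{g}_\infty,K_\infty;\pi_\infty)\,.
\]
Since $\fm_{\Sigma'}$ is non-Eisenstein, the localization kills the Eisenstein part (this uses the hypotheses on $\rhobar_{\fm}$), so only cuspidal $\pi$ contribute.

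Next I would invoke the correspondence furnished by Conjecture B of \cite{Iyengar/Khare/Manning:2022a} (and by \cite{HLTT} in the CM case): the augmentation $\lambda\colon \TT_{\Sigma'}\onto \mco$ corresponds to a unique Galois representation $\rho_\lambda$, and this in turn corresponds to a unique cuspidal automorphic representation $\pi_\lambda$ of $\PGL_2(\bbA_F)$ by modularity. Strong multiplicity one for $\GL_2$ gives $m(\pi_\lambda)=1$. The conductor of $\pi_\lambda$ equals $\mcn_{\Sigma'}$, and combined with the rigidity at the auxiliary prime built into the definition of $K_0(\mcn_{\Sigma'})$ (cf.\ \cite[Section 13]{Iyengar/Khare/Manning:2022a}), this yields $\dim_E \pi_{\lambda,f}^{K_0(\mcn_{\Sigma'})}=1$. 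Thus the $\lambda$-eigenspace reduces to $\hh^{r_1+r_2}(\mathfrak{g}_\infty,K_\infty;\pi_{\lambda,\infty})$.

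Finally I would compute the archimedean contribution place by place. At each real place, $\pi_{\lambda,v}$ is the weight-two discrete series of $\PGL_2(\RR)$, which has $(\mathfrak{g}_v,K_v)$-cohomology concentrated in degree $1$ with dimension $2$ (the two signs accounting for holomorphic and antiholomorphic contributions). At each complex place, $\pi_{\lambda,v}$ is a tempered cohomological principal series of $\PGL_2(\CC)$, whose $(\mathfrak{g}_v,K_v)$-cohomology is concentrated in degree $1$ with dimension $1$. The K\"unneth formula for $(\mathfrak{g}_\infty,K_\infty)$-cohomology then places $\hh^{r_1+r_2}(\mathfrak{g}_\infty,K_\infty;\pi_{\lambda,\infty})$ in the correct total degree with dimension $2^{r_1}\cdot 1^{r_2}=2^{r_1}$, as required.

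The step requiring most care is the passage from the abstract $\lambda$-eigenspace to the automorphic representation $\pi_\lambda$ with control of multiplicities and the old/new form structure: one has to ensure that the auxiliary prime in the definition of $K_0(\mcn_{\Sigma'})$ does not contribute extra dimensions, and that Conjectures B and D of \cite{Iyengar/Khare/Manning:2022a} genuinely produce the required bijection between characteristic-zero points of $\Spec \TT_{\Sigma'}$ and cuspidal automorphic representations of the expected level. Once that matching is in place the remaining computations are local and standard.
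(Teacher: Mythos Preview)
Your proposal is correct and is essentially an unpacking of the paper's own proof, which consists of nothing more than a reference to \cite[\S 3.6.2]{Harder}. The Matsushima decomposition together with the archimedean $(\mathfrak{g},K)$-cohomology computation you sketch is precisely the content behind that citation, so the two approaches coincide, with yours being the explicit version.

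One small correction at the finite places: the assertion that the conductor of $\pi_\lambda$ equals $\mcn_{\Sigma'}$ is not automatic for an arbitrary augmentation $\lambda$ of $\TT_{\Sigma'}$, since $\lambda$ could in principle arise from an oldform of strictly smaller conductor. What actually forces $\dim_E\big(\pi_{\lambda,f}^{K_0(\mcn_{\Sigma'})}\big)=1$ is that the Hecke algebra $\TT_{\Sigma'}$, as set up in \cite[Section 13]{Iyengar/Khare/Manning:2022a}, includes the $U_v$-operators at primes $v\in\Sigma'$, so a full system of eigenvalues for $\TT_{\Sigma'}$ cuts the local invariants down to a line even in the oldform case. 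You already flag the old/new structure as the step requiring care, so this is a refinement of your argument rather than a gap.
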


\begin{proof}
Set $\fp_{\lambda}\colonequals \Ker(\lambda)$. Recall that
	\[
	\hh_{r_1+r_2}(Y_0(\mcn_{\Sigma'}),\mco)_{\fm_{\Sigma'}}\otimes_\lambda E \colonequals
	(\hh_{r_1+r_2}(Y_0(\mcn_{\Sigma'}),\mco)_{\fm_{\Sigma'}}/\fp_\lambda)\otimes_\mco E
	\]
The lemma can be deduced from  \cite[\S 3.6.2]{Harder}. 
\end{proof} 

This allows us to compute the rank $\mu_{\Sigma'}$ for all $\Sigma'$ for which an appropriate augmentation $\TT_{\Sigma'}\onto\mco$ exists:

\begin{corollary}\label{cor:mu_Sigma}
Take any $\Sigma'\subseteq T$. If there exists an augmentation $\lambda\colon \TT_{\Sigma'}\onto \mco$ such that the pullback
\[
R_{\Sigma',\infty}\onto R_{\Sigma'}\cong \TT_{\Sigma'}\xrightarrow{\lambda}\mco
\]
is equal to $\lambda_{\bs v}$ for some $\displaystyle {\bs v} \in \mcv_{\Sigma'}\sm \bigcup_{\Sigma''\subsetneq \Sigma'} \mcz_{\Sigma''}$, then $\mu_{\Sigma'} = 2^{r_1}$.
\end{corollary}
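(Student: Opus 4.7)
The plan is to compute $\dim_E(M_{\Sigma',\infty}\otimes_{A_{\Sigma'},\lambda_{\bs v}}E)$ in two independent ways and equate the answers. Write $\lambda_{\bs v}\colon A_{\Sigma'} = R_{\Sigma',\infty}\onto R_{\Sigma'}\cong\TT_{\Sigma'}\xra{\lambda}\mco$ for the composition whose kernel is $\fp_{\bs v}$.

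First, because the map $R_{\Sigma',\infty}\onto R_{\Sigma'}$ is realized by tensoring over $S_\infty$ with $\mco$, and the patching identification gives $M_{\Sigma',\infty}\otimes_{S_\infty}\mco\cong \hh_{r_1+r_2}(Y_0(\mcn_{\Sigma'}),\mco)_{\fm_{\Sigma'}}$, I obtain
\[
M_{\Sigma',\infty}\otimes_{A_{\Sigma'},\lambda_{\bs v}}E \cong \hh_{r_1+r_2}(Y_0(\mcn_{\Sigma'}),\mco)_{\fm_{\Sigma'}}\otimes_{\TT_{\Sigma'},\lambda}E\,,
\]
which has $E$-dimension $2^{r_1}$ by Lemma~\ref{lem:generic multiplicity}.

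Next, I would show that the same dimension equals $\mu_{\Sigma'}$ by exhibiting freeness at $\bs v$. The first subtask is combinatorial: verify that the hypothesis forces $a_i\neq 0$ for every $i\in\Sigma'$. Indeed, $\bs v$ is an $\mco$-point of $\Spec A_{\Sigma'}$, hence lies on some component $V(I_{\Sigma''})$ with $\Sigma''\subseteq\Sigma'$; ruling out $\Sigma''\subsetneq\Sigma'$ by hypothesis gives $\bs v\in\mcz_{\Sigma'}$, so $b_j=0$ for all $j\in\Sigma'$. If also $a_{i_0}=0$ for some $i_0\in\Sigma'$, then $\Sigma''\colonequals \Sigma'\sm\{i_0\}$ satisfies both $a_i=0$ for $i\notin\Sigma''$ and $b_j=0$ for $j\in\Sigma''$, placing $\bs v$ in $\mcz_{\Sigma''}$, a contradiction. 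With each $x_i$ ($i\in\Sigma'$) thus a unit at $\fp_{\bs v}$, the relations $x_iy_i=0$ collapse to $y_i=0$ in $(A_{\Sigma'})_{\fp_{\bs v}}$, exhibiting this local ring as a localization of a power series ring over $\mco$, hence regular. The Auslander--Buchsbaum equality then makes the maximal Cohen--Macaulay module $(M_{\Sigma',\infty})_{\fp_{\bs v}}$ free over $(A_{\Sigma'})_{\fp_{\bs v}}$, and its rank agrees with the generic rank of $M_{\Sigma',\infty}$ along $V(I_{\Sigma'})$. Property~(2) of \ref{ch:modules}, applied at any $\bs v'\in\mcz^\circ_{\Sigma'}$ (where $\pi_{T,\Sigma'}$ is an isomorphism), identifies this generic rank with $\rank_{\Sigma'}(M_{T,\infty})=\mu_{\Sigma'}$. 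Since $k(\fp_{\bs v})=E$, tensoring the free module down yields an $E$-vector space of dimension $\mu_{\Sigma'}$.

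Equating the two computations gives $\mu_{\Sigma'}=2^{r_1}$. The main obstacle, and the only step where the precise shape of the hypothesis on $\bs v$ is used, is the combinatorial deduction that no $a_i$ with $i\in\Sigma'$ vanishes; once that is in hand, the regularity of $(A_{\Sigma'})_{\fp_{\bs v}}$ and the rest of the identification are formal.
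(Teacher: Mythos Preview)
Your proposal is correct and follows essentially the same approach as the paper's proof: identify $\dim_E(M_{\Sigma',\infty}\otimes_\lambda E)$ with $2^{r_1}$ via Lemma~\ref{lem:generic multiplicity}, and with $\mu_{\Sigma'}$ via regularity of $(A_{\Sigma'})_{\fp_{\bs v}}$ and freeness of the localized module. The paper asserts in one sentence that the hypothesis on $\bs v$ forces $\fp_{\bs v}\in\mcz_{\Sigma'}$ and regularity at $\fp_{\bs v}$; you supply the explicit combinatorial verification (that $a_i\ne 0$ for all $i\in\Sigma'$, whence the relations $x_iy_i=0$ collapse), which is exactly what underlies the paper's assertion.
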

\begin{proof}
The condition that $\displaystyle {\bs v} \in \mcv_{\Sigma'}\sm \bigcup_{\Sigma''\subsetneq \Sigma'} \mcz_{\Sigma''}$ ensures that $\fp_{\bs v}$ lies in $\mcz_{\Sigma'}$ and that $\Spec R_{\Sigma',\infty}$ is regular at $\fp_{\bs v}$. It follows by the discussion in Section \ref{se:setup} that $\rank_{\fp_{\bs v}} M_{\Sigma',\infty} = \mu_{\Sigma'}$. Thus
\begin{align*}
	\mu_{\Sigma'} 
	&= \dim_E (M_{\Sigma',\infty}\otimes_{\lambda}E)\\
	&= \dim_E ((M_{\Sigma',\infty}\otimes_{R_{\Sigma',\infty}}R_{\Sigma'})\otimes_{\lambda}E)\\
	&= \dim_E ((M_{\Sigma',\infty}\otimes_{S_\infty}\mco)\otimes_{\lambda}E)\\
	&= \dim_E (\hh_{r_1+r_2}(Y_0(\mcn_{\Sigma'}),\mco)_{\fm_{\Sigma'}}\otimes_{\lambda}E)\\
	&= 2^{r_1}
\end{align*}
by Lemma \ref{lem:generic multiplicity}.
\end{proof}

In particular, the integral lift $\rho\colon G_F\to \GL_2(\mco)$ of $\rhobar_{\fm}$ from Hypothesis \ref{hy:integral lift} gives the following:

\begin{corollary}\label{cor:mu_T}
$\mu_T = 2^{r_1}$.
\end{corollary}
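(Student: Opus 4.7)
The plan is to apply Corollary~\ref{cor:mu_Sigma} with $\Sigma' = T$, using the lift $\rho$ supplied by Hypothesis~\ref{hy:integral lift} as the source of the required augmentation.

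First I would produce the augmentation. By construction $\rho \colon G_F \to \GL_2(\mco)$ satisfies exactly the local deformation conditions defining the global deformation problem represented by $R_T$, so it determines an $\mco$-algebra homomorphism $R_T \to \mco$. Composing with the isomorphism $R_T \cong \TT_T$ from the patching setup of \cite[Section~14]{Iyengar/Khare/Manning:2022a} yields an augmentation $\lambda \colon \TT_T \onto \mco$, whose pullback along $R_{T,\infty} \onto R_T$ equals $\lambda_{\bs v}$ for a unique $\bs v \in \mcv_T$.

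Next I would check that $\bs v$ lies in $\mcv_T \setminus \bigcup_{\Sigma'' \subsetneq T} \mcz_{\Sigma''}$. Unwinding the definitions of \ref{ch:o-points}, this complement is precisely $\mcz^\circ_T = \{(\bs a, \bs b, \bs c) \in \mcv \mid a_i \ne 0 \text{ for all } i \in T\}$ (and hence $b_i = 0$ for all $i \in T$, by $a_i b_i = 0$). In the identification coming from the patching construction of \cite[Section~14]{Iyengar/Khare/Manning:2022a}, each pair of patching variables $(x_i, y_i)$ parameterises the local deformation at the prime $v_i \in T$, with $\{x_i = 0\}$ cutting out the unramified component and $\{y_i = 0\}$ the unipotently ramified one. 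Since Hypothesis~\ref{hy:integral lift} requires $\rho$ to be (unipotently) ramified at every $v_i \in T$, the point $\bs v$ cannot lie on the unramified component at any $v_i$; that is, $a_i(\bs v) \ne 0$ for every $i \in T$, which places $\bs v$ in $\mcz^\circ_T$. Corollary~\ref{cor:mu_Sigma} then gives $\mu_T = 2^{r_1}$.

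The only non-routine point is the second step, namely the identification of the formal variables $x_i, y_i$ of $R_{T,\infty}$ with the local deformation coordinates at the primes of $T$, together with the bookkeeping of which component corresponds to ramified versus unramified lifts. This matching is built into the patching setup of \cite[Section~14]{Iyengar/Khare/Manning:2022a}; once accepted, the corollary is essentially immediate.
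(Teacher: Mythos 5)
Your proposal is correct and follows essentially the same route as the paper's own proof. The only cosmetic difference is that the paper first invokes the Calegari--Geraghty modularity lifting theorems to conclude that $\rho$ is modular and hence corresponds to an augmentation $\lambda\colon \TT_T\onto\mco$, whereas you obtain the augmentation by first producing a map $R_T\to\mco$ via the universal property of the deformation ring and then transferring it across the isomorphism $R_T\cong\TT_T$ already recorded in the setup; since that isomorphism \emph{is} the modularity lifting theorem, the two derivations carry the same content. Your identification of the complement $\mcv_T\sm\bigcup_{\Sigma''\subsetneq T}\mcz_{\Sigma''}$ with $\mcz^\circ_T$ and your use of the ramification of $\rho$ at every prime of $T$ to place $\bs v$ there matches the paper's appeal to \cite[Proposition~12.1]{Iyengar/Khare/Manning:2022a}.
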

\begin{proof}
Recall that we have assumed $\rhobar_\fm|_{G_{F(\zeta_\ell)}}$ is absolutely irreducible. Thus standard modularity lifting results (see for instance \cite[Theorem 5.16, Theorem 9.19]{Calegari/Geraghty:2018}) imply that the representation $\rho$ is modular, and moreover is equal to $\rho_\lambda$ for some augmentation $\lambda\colon \TT_T\onto \mco$. Let the pullback
\[
R_{T,\infty}\onto R_{T}\cong \TT_{T}\xrightarrow{\lambda}\mco
\]
equal $\lambda_{\bs v}$ for some ${\bs v}\in \mcv_T$.

By the description of $R_{T,\infty}$ in \cite{Iyengar/Khare/Manning:2022a} (in particular, \cite[Proposition 12.1]{Iyengar/Khare/Manning:2022a}) the fact that $\rho$ is ramified at all primes in $T$ implies that ${\bs v}\not\in \mcz_{\Sigma'}$ for any $\Sigma'\subsetneq T$. Hence Corollary \ref{cor:mu_Sigma} gives $\mu_T = 2^{r_1}$.
\end{proof}

Now as $N(\rhobar_{\fm}) = \mcn_\es$ we have that $M_{\es,\infty} \ne 0$ and so $\mu_{\es}\ge 1$. Hence $1\le \mu_\es \le \mu_T = 2^{r_1}$ by Proposition \ref{pr:free-summand}.

If the number field $F$ is totally complex, so that $r_1=0$, we then get $\mu_\es = 1 = \mu_T$ and so Theorem \ref{th:only} implies that $M_{\Sigma,\infty}\cong R_{\Sigma,\infty}$. Applying $-\otimes_{S_\infty}\mco$, this gives that $\hh_{r_2}(Y_0(\mcn_\Sigma),\mco)_{\fm_\Sigma}\cong R_\Sigma\cong\TT_\Sigma$. We have thus proved the following:

\begin{theorem}\label{th:mult 1}
Let $F$ be a totally complex number field in which $\ell$ does not ramify, and assume Conjectures A, B, C and D from \cite{Iyengar/Khare/Manning:2022a} hold for $F$. 

Let $\mcn_\es\subseteq \mco_F$ be a nonzero ideal and let $\fm\subseteq {\TT}(K_0(\mcn_\es))$ be a non-Eisenstein maximal ideal such that $N(\rhobar_\fm) = \mcn_\es$ and $\rhobar_\fm|_{G_{F(\zeta_\ell)}}$ is absolutely irreducible. Let $\Sigma$ be a finite set of primes of $\mco_F$ such that for all $v\in \Sigma$:
\begin{itemize}
	\item $\rhobar_v$ is unramified.
	\item $v\nmid \ell$ and $q_v \not\equiv 1\pmod{\ell}$.
	\item $\rhobar_v(\Frob_v)$ has eigenvalues $q_v\epsilon_v$ and $\epsilon_v$ for some $\epsilon_v=\pm 1$.
\end{itemize}
Assume that $\rhobar_\fm$ and $\Sigma$ satisfy Hypothesis \ref{hy:integral lift}. Then $\hh_{r_2}(Y_0(\mcn_\Sigma),\mco)_{\fm_\Sigma}$ is free of rank $1$ over $\TT_\Sigma$. \qed
\end{theorem}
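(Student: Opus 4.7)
The plan is to assemble the pieces already set up just before the theorem statement, reducing everything to an application of Theorem~\ref{th:only}. The patching construction from \cite{Iyengar/Khare/Manning:2022a} produces, for each $\Sigma'\subseteq T$, a ring $R_{\Sigma',\infty}$ and a module $M_{\Sigma',\infty}$, together with a power series ring $S_\infty$ such that $R_{\Sigma',\infty}\otimes_{S_\infty}\mco \cong \TT_{\Sigma'}$ and $M_{\Sigma',\infty}\otimes_{S_\infty}\mco \cong \hh_{r_1+r_2}(Y_0(\mcn_{\Sigma'}),\mco)_{\fm_{\Sigma'}}$. The first step is to identify the families $\{R_{\Sigma',\infty}\}$ and $\{M_{\Sigma',\infty}\}$ with the families $\{A_{\Sigma'}\}$ and $\{M_{\Sigma'}\}$ of Section~\ref{se:setup}, so that Theorem~\ref{th:only} becomes applicable; this identification is exactly what is carried out in \cite[Section 14]{Iyengar/Khare/Manning:2022a}.

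Next I would pin down the two generic ranks $\mu_\es$ and $\mu_T$. Since $F$ is totally complex, $r_1=0$. Corollary~\ref{cor:mu_T}, which uses Hypothesis~\ref{hy:integral lift} together with standard modularity lifting theorems applied to the lift $\rho$ to produce an augmentation $\lambda\colon \TT_T\onto \mco$ lying on the ``highest level'' component, gives $\mu_T=2^{r_1}=1$. On the other hand, $N(\rhobar_\fm)=\mcn_\es$ ensures $M_{\es,\infty}\ne 0$, and since $M_{\es,\infty}$ is a nonzero maximal Cohen--Macaulay module over the regular ring $R_{\es,\infty}$, it is free of positive rank, yielding $\mu_\es\ge 1$. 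Combined with the inequality $\mu_\es\le \mu_T$ from Proposition~\ref{pr:free-summand}, this forces $\mu_\es=\mu_T=1$.

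With the hypothesis $\mu_T\le \mu_\es$ now verified, Theorem~\ref{th:only} applies and gives an isomorphism
\[
M_{\Sigma,\infty} \cong R_{\Sigma,\infty}^{\mu_\es} = R_{\Sigma,\infty}
\]
for every $\Sigma\subseteq T$, and in particular for the $\Sigma$ of the theorem. Applying $-\otimes_{S_\infty}\mco$ then yields
\[
\hh_{r_2}(Y_0(\mcn_\Sigma),\mco)_{\fm_\Sigma} \cong R_\Sigma \cong \TT_\Sigma,
\]
which is precisely the desired freeness of rank one over $\TT_\Sigma$ (noting $r_1+r_2=r_2$).

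The only nontrivial content beyond bookkeeping is verifying that the patched data indeed satisfies conditions (1)--(3) of~\ref{ch:modules}; the main obstacle in principle would be property~(3), concerning how the duality-twisted composition on $M_{\Sigma'}$ recovers multiplication by $y_s$, but this is established already in \cite[Section 14]{Iyengar/Khare/Manning:2022a}, so no new argument is required here.
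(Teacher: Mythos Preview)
Your argument is correct and follows essentially the same route as the paper's own proof: use Corollary~\ref{cor:mu_T} to get $\mu_T=2^{r_1}=1$, use $N(\rhobar_\fm)=\mcn_\es$ together with the regularity of $R_{\es,\infty}$ to get $\mu_\es\ge 1$, combine with Proposition~\ref{pr:free-summand} to force $\mu_\es=\mu_T=1$, then apply Theorem~\ref{th:only} and descend via $-\otimes_{S_\infty}\mco$. The additional commentary on verifying conditions (1)--(3) of~\ref{ch:modules} is accurate bookkeeping and matches what the paper defers to \cite[Section~14]{Iyengar/Khare/Manning:2022a}.
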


When $F$ is not totally complex, we get the following weaker result:

\begin{theorem}\label{th:mult 2^r1}
	Let $F$ be a number field in which $\ell$ does not ramify, and assume Conjectures A, B, C and D from \cite{Iyengar/Khare/Manning:2022a} hold for $F$. 
	
	Let $\mcn_\es\subseteq \mco_F$ be a nonzero ideal and let $\fm\subseteq {\TT}(K_0(\mcn_\es))$ be a non-Eisenstein maximal ideal such that $N(\rhobar_\fm) = \mcn_\es$ and $\rhobar_\fm|_{G_{F(\zeta_\ell)}}$ is absolutely irreducible. Let $\Sigma$ be a finite set of primes of $\mco_F$ such that for all $v\in \Sigma$:
	\begin{itemize}
		\item $\rhobar_v$ is unramified.
		\item $v\nmid \ell$ and $q_v \not\equiv 1\pmod{\ell}$.
		\item $\rhobar_v(\Frob_v)$ has eigenvalues $q_v\epsilon_v$ and $\epsilon_v$ for some $\epsilon_v=\pm 1$.
	\end{itemize}
	Assume that $\rhobar_\fm$ and $\Sigma$ satisfy Hypothesis \ref{hy:integral lift} and that $\hh_{r_1+r_2}(Y_0(\mcn_\es),E)_{\fm_\Sigma}\ne 0$.
	
	Then $\hh_{r_1+r_2}(Y_0(\mcn_\Sigma),\mco)_{\fm_\Sigma}$ is free of rank $2^{r_1}$ over $\TT_\Sigma$.
\end{theorem}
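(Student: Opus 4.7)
The plan is to follow the strategy of Theorem~\ref{th:mult 1} essentially verbatim, with the additional hypothesis $\hh_{r_1+r_2}(Y_0(\mcn_\es),E)_{\fm_\Sigma}\ne 0$ playing the role that in the totally complex case was provided for free by $r_1=0$. All of the set-up carries over unchanged: we apply Sections~\ref{se:setup} and~\ref{se:freeness} to the patched data $A = R_{T,\infty}$, $A_{\Sigma'} = R_{\Sigma',\infty}$, $M_{\Sigma'} = M_{\Sigma',\infty}$, so that the theorem reduces to establishing the equality $\mu_\varnothing = \mu_T = 2^{r_1}$ and then invoking Theorem~\ref{th:only}.

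One half is immediate: Corollary~\ref{cor:mu_T}, whose proof makes no parity assumption on $F$ and uses only Hypothesis~\ref{hy:integral lift} together with standard modularity lifting, already supplies $\mu_T = 2^{r_1}$. For the other half I would use the new non-vanishing assumption to produce an augmentation $\lambda\colon \TT_\varnothing \onto \mco$: since $\TT_\varnothing$ is a finite $\mco$-algebra, the hypothesis $\hh_{r_1+r_2}(Y_0(\mcn_\es),E)_{\fm_\Sigma}\ne 0$ singles out a generic point of $\Spec \TT_\varnothing$ in its support, and by the blanket assumption (recalled just before Corollary~\ref{cor:mu_Sigma}) that we have already enlarged $E$ so all augmentations $\TT_{\Sigma'}\to \overline{\QQ}_\ell$ land in $\mco$, such an $\mco$-valued augmentation is available. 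Its pullback through $R_{\varnothing,\infty}\onto R_\varnothing\cong \TT_\varnothing$ has the form $\lambda_{\bs v}$ for some ${\bs v}\in \mcv_\varnothing$. Because there is no proper subset of $\varnothing$, the condition ${\bs v} \notin \bigcup_{\Sigma''\subsetneq\varnothing}\mcz_{\Sigma''}$ required by Corollary~\ref{cor:mu_Sigma} is vacuous, and that corollary then yields $\mu_\varnothing = 2^{r_1}$.

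With $\mu_T = \mu_\varnothing = 2^{r_1}$ in hand, the hypothesis $\mu_T \le \mu_\varnothing$ of Theorem~\ref{th:only} is satisfied. The theorem therefore produces isomorphisms $M_{\Sigma,\infty}\cong R_{\Sigma,\infty}^{2^{r_1}}$ for every $\Sigma\subseteq T$, and applying $-\otimes_{S_\infty}\mco$ together with the identifications $R_{\Sigma,\infty}\otimes_{S_\infty}\mco = R_\Sigma \cong \TT_\Sigma$ and $M_{\Sigma,\infty}\otimes_{S_\infty}\mco \cong \hh_{r_1+r_2}(Y_0(\mcn_\Sigma),\mco)_{\fm_\Sigma}$ converts this into the asserted freeness of rank $2^{r_1}$. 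The only substantive step requiring care, and hence the main obstacle, is the passage from the non-vanishing of $\hh_{r_1+r_2}(Y_0(\mcn_\es),E)_{\fm_\Sigma}$ to an $\mco$-valued augmentation of $\TT_\varnothing$ whose pullback to $R_{\varnothing,\infty}$ corresponds to a genuine point of $\mcv_\varnothing$ in the sense of \ref{ch:o-points}; everything else is a direct application of the machinery developed in Sections~\ref{se:setup}--\ref{se:freeness} and the patching results imported from \cite{Iyengar/Khare/Manning:2022a}.
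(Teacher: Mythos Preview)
Your proposal is correct and follows essentially the same approach as the paper's own proof: use the non-vanishing hypothesis to extract an augmentation $\lambda'\colon \TT_\varnothing\onto\mco$, pull it back to some $\lambda_{\bs v'}$ with $\bs v'\in\mcv_\varnothing$, observe that the condition of Corollary~\ref{cor:mu_Sigma} is vacuous for $\Sigma'=\varnothing$ to obtain $\mu_\varnothing=2^{r_1}=\mu_T$, and then invoke Theorem~\ref{th:only} and descend along $-\otimes_{S_\infty}\mco$. The paper's proof is terser but structurally identical, and the step you flag as the main obstacle is treated there as immediate.
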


\begin{proof}
The hypothesis that $\hh_{r_1+r_2}(Y_0(\mcn_\es),E)_{\fm_\Sigma}\ne 0$ implies there exists an augmentation $\lambda'\colon \TT_\es\onto \mco$. Again, this pulls back to an augmentation
\[
\lambda_{\bs v'}\colon R_{\es,\infty}\onto R_\es\cong \TT_\es\xrightarrow{\lambda'}\mco
\]
for some ${\bs v'} \in \mcv_\es$. Vacuously we have ${\bs v'} \not\in \mcz_{\Sigma''}$ for $\Sigma''\subsetneq\es$ and so Corollary \ref{cor:mu_Sigma} gives $\mu_\es = 2^{r_1} = \mu_T$.

The claim now follows from the previous argument.
\end{proof}

\section*{Acknowledgements}
This work is partly supported by National Science Foundation grants DMS-200985 (SBI) and DMS-2200390 (CBK), and  by a Simons Fellowship (CBK). 
The second author thanks the Tata Institute for Fundamental Research in Mumbai, and the third author thanks the Max Planck Institute for Mathematics in Bonn, for support and hospitality. 

\bibliographystyle{amsplain}

\begin{bibdiv}
\begin{biblist}

\bib{Bruns/Herzog:1998}{book}{
      author={Bruns, Winfried},
      author={Herzog, J{\"u}rgen},
       title={Cohen-macaulay rings},
     edition={2},
      series={Cambridge Studies in Advanced Mathematics},
   publisher={Cambridge University Press},
        date={1998},
}

\bib{Calegari/Geraghty:2018}{article}{
      author={Calegari, Frank},
      author={Geraghty, David},
       title={Modularity lifting beyond the {T}aylor-{W}iles method},
        date={2018},
        ISSN={0020-9910},
     journal={Invent. Math.},
      volume={211},
      number={1},
       pages={297\ndash 433},
         url={https://doi.org/10.1007/s00222-017-0749-x},
      review={\MR{3742760}},
}

\bib{Diamond:1997}{article}{
      author={Diamond, Fred},
       title={The {T}aylor-{W}iles construction and multiplicity one},
        date={1997},
        ISSN={0020-9910},
     journal={Invent. Math.},
      volume={128},
      number={2},
       pages={379\ndash 391},
         url={http://dx.doi.org/10.1007/s002220050144},
      review={\MR{1440309}},
}

\bib{Harder}{article}{
      author={Harder, Gunter},
       title={Eisenstein cohomology of arithmetic groups. the case gl2},
        date={1987},
     journal={Invent. Math},
      volume={89},
      number={1},
       pages={37\ndash 118},
}

\bib{HLTT}{article}{
      author={Harris, M.},
      author={Lan, Kai-Wen},
      author={Taylor, R.},
      author={Thorne, J.},
       title={On the rigid cohomology of certain shimura varieties},
        date={2016},
     journal={Res. Math. Sci.},
      volume={3},
      number={37},
       pages={308},
}

\bib{Iyengar/Khare/Manning:2022a}{article}{
      author={{Iyengar}, Srikanth~B.},
      author={{Khare}, Chandrashekhar~B.},
      author={{Manning}, Jeffrey},
       title={{Congruence modules and the Wiles-Lenstra-Diamond numerical
  criterion in higher codimensions}},
        date={2022-06},
     journal={arXiv e-prints},
       pages={arXiv:2206.08212},
      eprint={2206.08212},
}

\bib{Mazur}{article}{
      author={Mazur, Barry},
       title={Modular curves and the eisenstein ideal},
        date={1977},
     journal={Publ. Math. Inst. Hautes \'Etudes Sci.},
      volume={47},
       pages={33\ndash 186},
}

\bib{Taylor/Wiles:1995}{article}{
      author={Taylor, Richard},
      author={Wiles, Andrew},
       title={Ring-theoretic properties of certain {H}ecke algebras},
        date={1995},
        ISSN={0003-486X},
     journal={Ann. of Math. (2)},
      volume={141},
      number={3},
       pages={553\ndash 572},
         url={http://dx.doi.org/10.2307/2118560},
      review={\MR{1333036}},
}

\bib{Wiles:1995}{article}{
      author={Wiles, Andrew},
       title={Modular elliptic curves and {F}ermat's last theorem},
        date={1995},
        ISSN={0003-486X},
     journal={Ann. of Math. (2)},
      volume={141},
      number={3},
       pages={443\ndash 551},
         url={https://doi.org/10.2307/2118559},
      review={\MR{1333035}},
}

\end{biblist}
\end{bibdiv}

\end{document}